\newtheorem{theorem}{Theorem}           
\newtheorem{lemma}[theorem]{Lemma}               
\newtheorem{corollary}[theorem]{Corollary}
\theoremstyle{definition}
\newtheorem{definition}[theorem]{Definition}
\newtheorem{example}[theorem]{Example}
\newtheorem{remark}[theorem]{Remark}
\DeclareMathOperator\dist{dist}
\author{Anna Muranova}
 \address{Anna Muranova: IRTG 2235, 
University Bielefeld, Postfach 10 01 31, 
33501 Bielefeld, Germany} 
 \email{anna.muranova@gmail.com}
\title{Effective impedance over ordered fields}
\thanks{This research was supported by IRTG 2235 Bielefeld-Seoul ``Searching for the regular in the irregular:
Analysis of singular and random systems".}
\begin{document}

\maketitle



\begin{abstract}
In this paper we study properties of effective impedance of finite electrical networks and calculate the effective
impedance of a finite ladder network over an ordered field. Moreover, we consider two particular examples of infinite ladder networks (Feynman's network or $LC$-network and $CL$-network, both with zero on infinity) as networks over the ordered Levi-Civita field $\mathcal R$. We show, that effective impedances of finite $LC$-networks converge to the limit in order topology of $\mathcal R$, but the effective impedances of finite $CL$-networks do not converge in the same topology.
\end{abstract}

{\footnotesize
{\bf Keywords:} {weighted graphs, electrical network, ladder network, effective impedance, Laplace operator, ordered field, non-Archimedean field, Levi-Civita field.} 
\smallskip

{\bf Mathematics Subject Classification 2010:}{ 05C22,  34B45, 05C25,  39A12, 12J15.} 
}

\section{Introduction}

It is known that electrical networks with resistances are related to weighted graphs (see e.g. \cite{DS}, \cite{LPW}, \cite{Soardi}). Moreover, it is shown in \cite{Grimmett} and \cite{LPW}, that effective resistance for finite networks satisfies the basic physical properties (e.g. parallel and series laws). In \cite{DS} and \cite{Grimmett} the notion of effective resistance for infinite network is introduced. An effective resistance is tightly related to random walk and Dirichlet problem on graphs, which are described in many papers and books (e. g. \cite{Barlow}, \cite{Grigoryan}, \cite{Woess},\cite{Soardi}).
In \cite{Muranova} a finite electrical network with alternating current and passive elements is considered as a generalization of electrical network with resistances. It is shown there, that such a network is related to weighted graphs over non-Archimedean ordered field of rational functions $\Bbb R(\lambda)$. The generalization of effective resistance for this case is called \emph{effective impedance}. The inverse of effective impedance is called \emph{effective admittance}. The most known in physics infinite network with passive elements is Feynman's ladder network ($LC$-network, see \cite{Feynman}). In \cite{Yoon} the effective impedances of $LC$-network and $CL$-network are considered as limits of complex-valued effective impedances of corresponding sequences of finite networks. 

The present paper consists of two parts. In the first part we describe some properties of electrical network over an ordered field. The main result of this part is Theorem \ref{SM}, which gives the mathematical description of well-known in physics star-mesh transform. The mathematical conceptions of parallel and series laws, as well as $\Delta-Y$ and $Y-\Delta$ transform, follow from Theorem \ref{SM} as corollaries.

In the second part of the present paper we discuss the question whether one can generalize the notion of effective resistance for infinite networks with zero on infinity (see e.g.  \cite{Grimmett}) for the case of non-Archimedean weighted graphs. The main theorem of this section is Theorem \ref{Thmmonotonicity}. It shows, that a sequence of effective admittances of finite networks, exhausted a given infinite network, decreases. Unfortunately, it does not give a convergence over non-Archimedean field. As examples, we consider
$LC$-network and $CL$-network (with zero on infinity) as electrical networks over ordered Levi-Civita field $\mathcal R$, which contains a subfield isomorphic to $\Bbb R(\lambda)$ (see \cite{Berz}, \cite{Hall}). 
Firstly, we present the general calculation of admittance of a finite ladder network (see Figure \ref{figFin}) over an ordered field. Then the closeness of the Levi-Civita field in order topology (\cite{Berz}, \cite{Shams}) gives us an opportunity to arise the question whether effective admittance of infinite network could be defined as limit of effective admittances of corresponding finite electrical networks in this case. We show, that in case of the $LC$-network the sequence of effective admittances of finite networks converge in ordered topology of the Levi-Civita field (Theorem \ref{ThmLC}). Moreover, we show, that admittances of finite $CL$-network do not converge in the same topology (Example \ref{ExCL}).
This shows, that in general it is not possible to generalize the notion of effective resistance for infinite networks for the case of non-Archimedean weights.

\section{Properties of effective impedance of the finite network over ordered field}
\begin{definition}
A network over an ordered field $(K,\succ)$ is a structure
\begin{equation*}
\Gamma=(V,\rho,a_0, B),
\end{equation*}
where 
\begin{itemize}
\item
$(V,E)$ is a locally finite connected graph ($|V|\ge 2$), 
\item
$\rho:E\rightarrow K$ is a positive function called \emph{admittance},
\item
$a_0\in V$ is a fixed vertex,
\item
$B=\{a_1,\dots,a_{|B|}\}\in V\setminus\{a_0\}$ is a fixed non-empty subset of vertices.
\end{itemize}
Let us denote by $B_0=B\cup\{a_0\}$ \emph{the set of boundary vertices} and by $z=\frac{1}{\rho}$ the positive function of \emph{impedance}.
\end{definition}

The network is called \emph{finite} if $|V|<\infty$. Otherwise, it is called \emph{infinite}.

Note that we can consider $\rho$ as a function from $V\times V$ to $K$ by setting $\rho_{xy}=0$, if $xy$ is not an edge.  Then the weight $\rho_{xy}$ gives rise to a function on vertices as follows:
\begin{equation}
\rho(x)=\sum_y \rho_{xy},
\end{equation}
where the notation $\sum\limits_y$ means $\sum\limits_{y\in V}$.
Then $\rho(x)$ is called the \emph{weight of a vertex} $x$. We have $0<\rho(x)<\infty$ for any vertex $x$ of a network .

Let us consider the following \emph{Dirichlet problem} on the given finite network $\Gamma=(V,\rho,a_0, B)$:
\begin{equation}\label{dirpr}
 \begin{cases}
\Delta_\rho v(x)=0 \mbox { on } V\setminus B_0,
   \\
   v(x)= 0 \mbox { on } B,
   \\
 v(a_0)=1,
 \end{cases}
\end{equation}
where $\Delta_\rho v(x)=\sum_y(v(y)-v(x))\rho_{xy}.$

The physical meaning of Dirichlet problem is the following: if we take $K=\Bbb R(\lambda)$ and admittance of each edge in the form
\begin{equation*}
\rho_{xy}=\frac{\lambda}{ L_{xy}\lambda^2+R_{xy}\lambda+{D_{xy}}}, \mbox{ }L_{xy},R_{xy},D_{xy}\ge0 \mbox{ and  } L_{xy}+R_{xy}+D_{xy}\ne 0,
\end{equation*}
then the real voltage at the vertex $x$ at time $t$ will be equal to $\Re(v(x)e^{i\omega t})$, assuming that we keep potential $1$ at the vertex $a_0$, ground all the vertices from $B$ and apply alternating current of frequency $\omega=-i\lambda$ to the network (see \cite{Muranova}).

Note that if $|V|=n$, then the Dirichlet problem \eqref{dirpr} is a $n\times n$ system of linear equations over the field $K$.  It can be also written in a matrix form (note, that here we already have substituted  $v(a_0)=1, v(a_i)=0, i=\overline{1,|B|}$ in the first $(n-|B|-1)$ equations):
\begin{equation}\label{dirprM}
 \begin{cases}
A {\hat v}={b},
   \\
   v(a_0)=1,
   \\
   v(a_i)=0, i=\overline{1,k},
 \end{cases}
\end{equation}
where $k=|B|$, $A$ is a symmetric matrix ($A=A^T$), ${\hat v},{b}$ are vector-columns of length $(n-k-1)$:

\begin{equation*}
A=
\begin{bmatrix}
\sum_{x\sim x_1}\rho_{xx_1}&-\rho_{x_1x_2}&\dots&-\rho_{x_1x_{n-k-1}}\\
-\rho_{x_1x_2}&\sum_{x\sim x_2}\rho_{xx_2}&\dots&-\rho_{x_2x_{n-k-1}}\\
\dots\\
-\rho_{x_1x_{n-k-1}}&-\rho_{x_2x_{n-k-1}}&\dots&\sum_{x\sim x_{n-k-1}}\rho_{xx_{n-k-1}}\\
\end{bmatrix},
\end{equation*}

\begin{equation*}
b=(\rho_{a_0x_1},\rho_{a_0x_2},\dots,\rho_{a_0x_{n-k-1}})^T, 
\end{equation*}
\begin{equation*}
\hat v=(v(x_1),v(x_2),\dots,v(x_{n-k-1}))^T.
\end{equation*}

In \cite{Muranova} it is proved, that the Dirichlet problem \eqref{dirpr} has a unique solution for any finite network over an ordered field.

\begin{definition}
We define \emph{effective impedance} of the finite network $\Gamma$ as
\begin{equation*}
Z_{eff}(\Gamma)=\frac{1}{\sum_{x:x\sim a_0}(1-v(x))\rho_{xa_0}},
\end{equation*}
where $v$ is the solution of the Dirichlet problem \eqref{dirpr}.

The \emph{effective admittance} is defined by
\begin{equation*}
\mathcal P_{eff}{(\Gamma)}=\sum_{x:x\sim a_0}(1-v(x))\rho_{xa_0}.
\end{equation*}
\end{definition}

\begin{lemma}
For the solution $v$ of the Dirichlet problem \eqref{dirpr} we have
\begin{equation}\label{differentEqforP}
\mathcal P_{eff}{(\Gamma)}=\sum_{i=1}^{|B|}\sum_{x:x\sim a_i}v(x)\rho_{xa_i}=\sum_{i=1}^{|B|}\Delta_\rho v(a_i)=-\Delta_\rho v(a_0)=\frac{1}{2}\sum_{\substack{x\sim y\\x,y\in V}}(\nabla_{xy}v)^2\rho_{xy},
\end{equation}
where $\nabla_{xy}v=v(y)-v(x)$.
\end{lemma}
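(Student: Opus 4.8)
The plan is to establish the chain of equalities in \eqref{differentEqforP} one link at a time, using the equations of the Dirichlet problem \eqref{dirpr} together with a discrete Green-type (summation by parts) identity. The starting point is the definition $\mathcal P_{eff}(\Gamma)=\sum_{x\sim a_0}(1-v(x))\rho_{xa_0}$. Since $v(a_0)=1$, we may rewrite $1-v(x)=v(a_0)-v(x)=-\nabla_{a_0 x}v$, so that $\mathcal P_{eff}(\Gamma)=-\sum_{x\sim a_0}(v(x)-v(a_0))\rho_{xa_0}=-\Delta_\rho v(a_0)$. This immediately gives the fourth expression in the chain.

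Next I would prove the global identity $\sum_{x\in V}\Delta_\rho v(x)=0$. This follows from the symmetry $\rho_{xy}=\rho_{yx}$: writing $\sum_x\Delta_\rho v(x)=\sum_x\sum_y(v(y)-v(x))\rho_{xy}$ and swapping the roles of $x$ and $y$ in the double sum shows the sum equals its own negative, hence is zero. Now split the vertex set as $V=(V\setminus B_0)\cup\{a_0\}\cup B$. On $V\setminus B_0$ we have $\Delta_\rho v(x)=0$ by the first line of \eqref{dirpr}, so $0=\sum_{x\in V}\Delta_\rho v(x)=\Delta_\rho v(a_0)+\sum_{i=1}^{|B|}\Delta_\rho v(a_i)$, which yields $\sum_{i=1}^{|B|}\Delta_\rho v(a_i)=-\Delta_\rho v(a_0)=\mathcal P_{eff}(\Gamma)$. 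For each $a_i\in B$ we have $v(a_i)=0$, so $\Delta_\rho v(a_i)=\sum_{x\sim a_i}(v(x)-v(a_i))\rho_{xa_i}=\sum_{x\sim a_i}v(x)\rho_{xa_i}$; summing over $i$ gives the first expression in the chain and ties everything together so far.

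The remaining and most delicate link is the quadratic (energy) expression $\tfrac12\sum_{x\sim y}(\nabla_{xy}v)^2\rho_{xy}$. The natural route is the general Green-type identity
\begin{equation*}
\sum_{x\in V} u(x)\,\Delta_\rho v(x) \;=\; -\frac12\sum_{\substack{x\sim y\\ x,y\in V}}(\nabla_{xy}u)(\nabla_{xy}v)\,\rho_{xy},
\end{equation*}
valid for any functions $u,v$ on $V$, again by the symmetry of $\rho$ and a reindexing of the double sum. Applying it with $u=v$ gives $\sum_{x\in V} v(x)\Delta_\rho v(x)=-\tfrac12\sum_{x\sim y}(\nabla_{xy}v)^2\rho_{xy}$. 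On the left-hand side, $\Delta_\rho v(x)=0$ for $x\in V\setminus B_0$, $v(a_i)=0$ for $a_i\in B$, and $v(a_0)=1$, so only the $a_0$ term survives: $\sum_{x\in V} v(x)\Delta_\rho v(x)=\Delta_\rho v(a_0)=-\mathcal P_{eff}(\Gamma)$. Hence $\mathcal P_{eff}(\Gamma)=\tfrac12\sum_{x\sim y}(\nabla_{xy}v)^2\rho_{xy}$, completing the chain.

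The only genuine obstacle is bookkeeping: making sure the Green identity and the reindexing arguments are carried out over an arbitrary ordered field $K$ (where nothing but the commutative ring axioms and the symmetry of $\rho$ is used — no positivity or analysis is needed here), and being careful with the factor of $\tfrac12$ and with which boundary terms vanish. I would state the Green identity as a small standalone computation (valid for all finite sums over $K$, since finiteness of $V$ makes every sum a finite algebraic expression) and then simply substitute the boundary data from \eqref{dirpr}. No convergence or topology issues arise since $\Gamma$ is finite.
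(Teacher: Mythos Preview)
Your argument is correct and is the standard route: rewrite the definition to identify $\mathcal P_{eff}(\Gamma)=-\Delta_\rho v(a_0)$, use the antisymmetry of $(v(y)-v(x))\rho_{xy}$ to get $\sum_{x\in V}\Delta_\rho v(x)=0$ and hence the boundary-sum expressions, and apply the discrete Green identity with $u=v$ for the energy form. The paper itself does not spell out a proof but simply refers to \cite{Muranova}, noting that ``the proof of this result follows the same outline as the proof of the similar result'' there; your outline is exactly that standard computation, so there is nothing to compare.
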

The proof of this result follows the same outline as the proof of the similar result in \cite{Muranova}.

\begin{theorem}{\bf(Star-mesh transform)}\label{SM}
Let $\Gamma = (V,\rho,a_0,B)$  be a finite network, $|V|=n$, $B_0=B\cup\{a_0\}$, and  $x_1,\dots ,x_m\in V$, $3 \le m \le n$, are such that 
\begin{enumerate}
\item
$x_1\not \in B_0$,

\item
$y\not\sim x_1$ for all $y \in V\setminus\{x_2,\dots ,x_m\}$,

\end{enumerate}
If one removes the vertex $x_1$, edges $(x_1,x_i), i=\overline{2,m}$ and change the admittances of the edges $(x_i,x_j), i,j=\overline{2,m}, i\ne j$ as follows:
\begin{equation}\label{rhoSM}
\rho'_{x_ix_j}=\rho_{x_ix_j}+\frac{\rho_{x_1x_i}\rho_{x_1x_j}}{\rho(x_1)},
\end{equation}
not changing the other admittances, then for the new network 
the solution of the Dirichlet problem \eqref{dirpr} for all the vertices will be the same as the solution of the Dirichlet problem \eqref{dirpr} on the original network at corresponding vertices.

\end{theorem}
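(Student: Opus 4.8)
The plan is to show that if $v$ denotes the (unique, by \cite{Muranova}) solution of the Dirichlet problem \eqref{dirpr} on $\Gamma$, then its restriction $\tilde v:=v|_{V\setminus\{x_1\}}$ is a solution of \eqref{dirpr} on the modified network $\Gamma'=(V\setminus\{x_1\},\rho',a_0,B)$; uniqueness of the Dirichlet solution on $\Gamma'$ then yields the asserted coincidence of the two solutions at every vertex of $V\setminus\{x_1\}$. The first ingredient is the identity satisfied by $v$ at the removed vertex. Since $x_1\notin B_0$, the first line of \eqref{dirpr} gives $\Delta_\rho v(x_1)=0$, and by hypothesis (2) all neighbours of $x_1$ lie in $\{x_2,\dots,x_m\}$; as $\rho(x_1)=\sum_{i=2}^m\rho_{x_1x_i}>0$, this rearranges to
\begin{equation}\label{eq:vx1SM}
v(x_1)=\frac{1}{\rho(x_1)}\sum_{i=2}^m v(x_i)\rho_{x_1x_i}.
\end{equation}

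Next I would check that $\Gamma'$ is again a network over $(K,\succ)$: each new weight $\rho'_{x_ix_j}=\rho_{x_ix_j}+\rho_{x_1x_i}\rho_{x_1x_j}/\rho(x_1)$ is a sum of non-negative elements of $K$, so $\rho'$ is positive on the edges of $\Gamma'$; every pair $x_i,x_j$ that was adjacent to $x_1$ in $\Gamma$ becomes adjacent in $\Gamma'$, so any walk through $x_1$ can be rerouted along such an edge and connectedness is preserved; and $|V\setminus\{x_1\}|=n-1\ge m-1\ge2$, with $B_0$ and the values of $v$ on it untouched. Hence \eqref{dirpr} on $\Gamma'$ has a unique solution, and the boundary conditions $\tilde v(a_0)=1$, $\tilde v|_B=0$ hold automatically. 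It then remains to check $\Delta_{\rho'}\tilde v(x)=0$ for every $x\in V\setminus(B_0\cup\{x_1\})$. If $x\notin\{x_2,\dots,x_m\}$, hypothesis (2) gives $x\not\sim x_1$ and no weight of an edge at $x$ was changed, so $\Delta_{\rho'}\tilde v(x)=\Delta_\rho v(x)=0$.

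If $x=x_i$ for some $i\in\{2,\dots,m\}$ (note then $x_i\notin B_0$), I would write out $\Delta_{\rho'}\tilde v(x_i)$ as a sum over the $\Gamma'$-neighbours of $x_i$, substitute \eqref{rhoSM}, and subtract the relation $\Delta_\rho v(x_i)=0$ to cancel the contributions of the neighbours lying outside $\{x_1,\dots,x_m\}$. This yields
\begin{equation*}
\Delta_{\rho'}\tilde v(x_i)=\rho_{x_1x_i}\left(\frac{1}{\rho(x_1)}\sum_{\substack{j=2\\ j\ne i}}^m(v(x_j)-v(x_i))\rho_{x_1x_j}-(v(x_1)-v(x_i))\right).
\end{equation*}
Since the $j=i$ summand is identically zero, the sum may be extended to all $j\in\{2,\dots,m\}$; then $\sum_{j=2}^m\rho_{x_1x_j}=\rho(x_1)$ together with \eqref{eq:vx1SM} shows that this sum equals $\rho(x_1)(v(x_1)-v(x_i))$, so the bracket vanishes and $\Delta_{\rho'}\tilde v(x_i)=0$. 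Thus $\tilde v$ solves \eqref{dirpr} on $\Gamma'$, and uniqueness of that solution finishes the proof.

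The only step requiring genuine care — and hence the main obstacle — is this last cancellation at $x=x_i$: one must keep precise track of whether the index $j=i$ is included in each sum and use $\rho(x_1)=\sum_{j=2}^m\rho_{x_1x_j}$ and \eqref{eq:vx1SM} in exactly the right places, so that the contribution of the deleted edge $(x_1,x_i)$ is precisely balanced by the $x_1$-pieces redistributed into the weights $\rho'_{x_ix_j}$. Everything else — positivity and connectedness of $\Gamma'$, the boundary conditions, and the passage to the conclusion via uniqueness of the Dirichlet solution — is routine.
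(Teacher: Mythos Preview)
Your proof is correct and follows essentially the same idea as the paper's: eliminate $v(x_1)$ from the system by solving its own harmonic equation \eqref{eq:vx1SM} and substituting into the remaining equations, then invoke uniqueness on $\Gamma'$. The paper packages this same substitution as one step of Gaussian elimination on the matrix form \eqref{dirprM} of the Dirichlet problem, which has the advantage of making transparent that the star-mesh transform is literally row-reduction with respect to the pivot $\rho(x_1)$; your direct verification at each $x_i$ is the same computation unpacked entrywise.
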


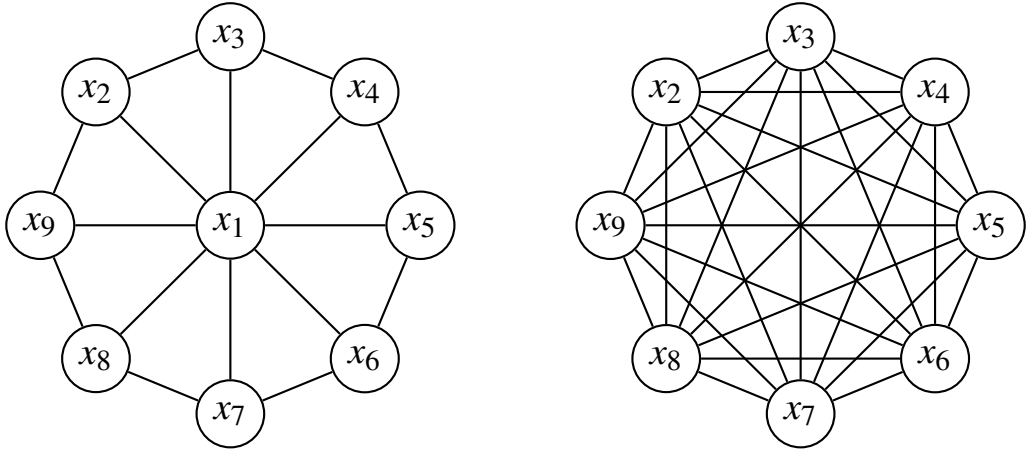
\begin{figure}[H]
\begin{tikzpicture}[auto,node distance=2.5cm,
                    thick,main node/.style={circle,draw,font=\sffamily\Large\bfseries}]
 
  \node[main node] (0) {$x_1$};

  \node[main node] (1) [left of=0] {$x_9$};
  \node[main node] (2) [above left of=0] {$x_2$};
  \node[main node] (3) [above of=0] {$x_3$};
  \node[main node] (4) [above right of=0] {$x_4$};
  \node[main node] (5) [right of=0] {$x_5$};
  \node[main node] (6) [below right of=0] {$x_6$};
  \node[main node] (7) [below of=0] {$x_7$};
  \node[main node] (8) [below left of=0] {$x_8$};

  \path[every node/.style={font=\sffamily\small}]
    (0) edge node [bend right] {} (1)
    (0) edge node [bend right] {} (2)
    (0) edge node [bend right] {} (3)
    (0) edge node [bend right] {} (4)
    (0) edge node [bend right] {} (5)
    (0) edge node [bend right] {} (6)
    (0) edge node [bend right] {} (7)
    (0) edge node [bend right] {} (8)
    (1) edge node [bend right] {} (2)
    (2) edge node [bend right] {} (3)
    (3) edge node [bend right] {} (4)
    (4) edge node [bend right] {} (5)
    (5) edge node [bend right] {} (6)
    (6) edge node [bend right] {} (7)
    (7) edge node [bend right] {} (8)
    (1) edge node [bend right] {} (8);

  \node[main node] (10)[right of=5] {$x_9$};

  \node[main node][draw=none] (00)[right of=10] {};
  \node[main node] (20) [above left of=00] {$x_2$};
  \node[main node] (30) [above of=00] {$x_3$};
  \node[main node] (40) [above right of=00] {$x_4$};
  \node[main node] (50) [right of=00] {$x_5$};
  \node[main node] (60) [below right of=00] {$x_6$};
  \node[main node] (70) [below of=00] {$x_7$};
  \node[main node] (80) [below left of=00] {$x_8$};

  \path[every node/.style={font=\sffamily\small}]

    (10) edge node [bend right] {} (20)
    (10) edge node [bend right] {} (30)
    (10) edge node [bend right] {} (40)
    (10) edge node [bend right] {} (50)
    (10) edge node [bend right] {} (60)
    (10) edge node [bend right] {} (70)
    (10) edge node [bend right] {} (80)
   
    (20) edge node [bend right] {} (30)
    (20) edge node [bend right] {} (40)
    (20) edge node [bend right] {} (50)
    (20) edge node [bend right] {} (60)
    (20) edge node [bend right] {} (70)
    (20) edge node [bend right] {} (80)

    (30) edge node [bend right] {} (40)
    (30) edge node [bend right] {} (50)
    (30) edge node [bend right] {} (60)
    (30) edge node [bend right] {} (70)
    (30) edge node [bend right] {} (80)

    (40) edge node [bend right] {} (50)
    (40) edge node [bend right] {} (60)
    (40) edge node [bend right] {} (70)
    (40) edge node [bend right] {} (80)

    (50) edge node [bend right] {} (60)
    (50) edge node [bend right] {} (70)
    (50) edge node [bend right] {} (80)
    
    (60) edge node [bend right] {} (70)
    (60) edge node [bend right] {} (80)
    (70) edge node [bend right] {} (80);

\end{tikzpicture}
\caption{Star-mesh transform for $m=9$}
\end{figure}

\begin{proof}
Let us consider the Dirichlet problem for the network $\Gamma$ in a matrix form (\ref{dirprM}). Obviously, it is enough to solve the matrix equation $A \hat v=b$.  Without loss of generality we can assume that $x_1,\dots,x_l\not\in B_0$, where $l=m-|\{x_1,\dots,x_m\}\cap B_0|$. Writing equations for $x_1,\dots,x_l$ as the first ones and denoting $k=|B|$, we have
\begin{equation*}
A=
\begin{bmatrix}
\rho(x_1)&-\rho_{x_1x_2}&\dots&-\rho_{x_1x_l}&0&\dots&0\\
-\rho_{x_1x_2}&\rho(x_2)&\dots&-\rho_{x_2x_l}&-\rho_{x_2x_{m+1}}&\dots&-\rho_{x_2x_{n-k-1}}\\
\dots&\dots&\dots&\dots&\dots&\dots&\dots\\
-\rho_{x_1x_l}&-\rho_{x_2x_l}&\dots&\rho(x_l)&-\rho_{x_lx_{m+1}}&\dots&-\rho_{x_lx_{n-k-1}}\\
0&-\rho_{x_2x_{m+1}}&\dots&-\rho_{x_lx_{m+1}}&\rho(x_{m+1})&\dots&-\rho_{x_{m+1}x_{n-k-1}}\\
\dots&\dots&\dots&\dots&\dots&\dots&\dots\\
0&-\rho_{x_2x_{n-k-1}}&\dots&-\rho_{x_lx_{n-k-1}}&-\rho_{x_{m+1}x_{n-k-1}}&\dots&\rho(x_{n-k-1})\\
\end{bmatrix},
\end{equation*}
 since $y\not\sim x_1$ for all $y \in V\setminus\{x_2,\dots ,x_m\}$,
and 
\begin{equation*}
b=(\rho_{a_0x_1},\rho_{a_0x_2},\dots,\rho_{a_0x_l},\rho_{a_0x_{m+1}},\dots,\rho_{a_0x_{n-k-1}})^T,
\end{equation*}
Now it is easy to calculate, that the star-mesh transform is just applying of Gaussian elimination method for the first row.
Indeed, applying Gaussian elimination method for the first row of the augmented matrix $\bar A=[A|b]$ we obtain:
\begin{equation*}
\left[
\begin{array}{ccccccc|c}
1&-\frac{\rho_{x_1x_2}}{\rho(x_1)}&\dots&-\frac{\rho_{x_1x_l}}{\rho(x_1)}&0&\dots&0&\frac{\rho_{a_0x_1}}{\rho(x_1)}\\
0&\rho^*(x_2)&\dots&-\rho'_{x_2x_l}&-\rho_{x_2x_{m+1}}&\dots&-\rho_{x_2x_{n-k-1}}&\rho^*_{a_0x_2}\\
\dots&\dots&\dots&\dots&\dots&\dots&\dots&\dots\\

0&-\rho'_{x_2x_l}&\dots&\rho^*(x_l)&-\rho_{x_lx_{m+1}}&\dots&-\rho_{x_lx_{n-k-1}}&\rho^*_{a_0x_l}\\
0&-\rho_{x_2x_{m+1}}&\dots&-\rho_{x_lx_{m+1}}&\rho(x_{m+1})&\dots&-\rho_{x_{m+1}x_{n-k-1}}&\rho_{a_0x_{m+1}}\\
\dots&\dots&\dots&\dots&\dots&\dots&\dots&\dots\\
0&-\rho_{x_2x_{n-k-1}}&\dots&-\rho_{x_lx_{n-k-1}}&-\rho_{x_{m+1}x_{n-k-1}}&\dots&\rho(x_{n-k-1})&\rho_{a_0x_{n-k-1}}\\
\end{array}
\right]
\end{equation*}

since $\rho(x_1)\ne 0$,
where
\begin{equation*}
\rho^*(x_i)=\rho(x_i)-\frac{\rho_{x_1x_i}^2}{\rho(x_1)} \mbox{ and } \rho^*_{a_0x_i}=\rho_{a_0x_i}+\frac{\rho_{x_1x_i}\rho_{a_0x_1}}{\rho(x_1)} \mbox{ for all } i=\overline{2,l}.
\end{equation*}
Note, that for all $i=\overline{2,l}$
\begin{equation*}
\begin{split}
\rho'(x_i)=&\rho(x_i) -\rho_{x_1x_i}-\sum_{\substack{j=2\\j\ne i}}^m \rho_{x_ix_j}+\sum_{\substack{j=2\\j\ne i}}^m \rho'_{x_ix_j}\\
=&\rho(x_i) -\rho_{x_1x_i}-\sum_{\substack{j=2\\j\ne i}}^m \rho_{x_ix_j}+\sum_{\substack{j=2\\j\ne i}}^m \left(\rho_{x_ix_j}+\frac{\rho_{x_1x_i}\rho_{x_1x_j}}{\rho(x_1)}\right)\\
=&\rho(x_i) -\rho_{x_1x_i}+\sum_{{\substack{j=2\\j\ne i}}}^m \frac{\rho_{x_1x_i}\rho_{x_1x_j}}{\rho(x_1)}\\
=&\rho(x_i) -\rho_{x_1x_i}+\frac{\rho_{x_1x_i}}{\rho(x_1)}\sum_{{\substack{j=2\\j\ne i}}}^m\rho_{x_1x_j} \\
=&\rho(x_i) -\rho_{x_1x_i}+\frac{\rho_{x_1x_i}}{\rho(x_1)}\sum_{j=2}^m\rho_{x_1x_j}-\frac{\rho_{x_1x_i}}{\rho(x_1)}\rho_{x_1x_i}\\
=&\rho(x_i) -\rho_{x_1x_i}+\frac{\rho_{x_1x_i}}{\rho(x_1)}\rho(x_1)-\frac{\rho_{x_1x_i}^2}{\rho(x_1)}\\
=&\rho(x_i)-\frac{\rho_{x_1x_i}^2}{\rho(x_1)}=\rho^*(x_i)
\end{split}
\end{equation*}
and
\begin{equation*}
\begin{split}
\rho^*_{a_0x_i}=\rho_{a_0x_i}+\frac{\rho_{x_1x_i}\rho_{a_0x_1}}{\rho(x_1)}=
\begin{cases}
\rho'_{a_0x_i}, & \text{if}\ a_0 \in\{x_2,\dots,x_m\}\\
\rho_{a_0x_i}, & \text{otherwise, since}\ \rho_{a_0x_1}=0.
 \end{cases}
\end{split}
\end{equation*}
Hence, $\bar A=$
\begin{equation*}
\left[
\begin{array}{ccccccc|c}
1&-\frac{\rho_{x_1x_2}}{\rho(x_1)}&\dots&-\frac{\rho_{x_1x_l}}{\rho(x_1)}&0&\dots&0&\frac{\rho_{a_0x_1}}{\rho(x_1)}\\
0&\rho'(x_2)&\dots&-\rho'_{x_2x_l}&-\rho_{x_2x_{m+1}}&\dots&-\rho_{x_2x_{n-k-1}}&\rho'_{a_0x_2}\\
\dots&\dots&\dots&\dots&\dots&\dots&\dots&\dots\\

0&-\rho'_{x_2x_l}&\dots&\rho'(x_l)&-\rho_{x_lx_{m+1}}&\dots&-\rho_{x_lx_{n-k-1}}&\rho'_{a_0x_l}\\
0&-\rho_{x_2x_{m+1}}&\dots&-\rho_{x_lx_{m+1}}&\rho(x_{m+1})&\dots&-\rho_{x_{m+1}x_{n-k-1}}&\rho_{a_0x_{m+1}}\\
\dots&\dots&\dots&\dots&\dots&\dots&\dots&\dots\\
0&-\rho_{x_2x_{n-k-1}}&\dots&-\rho_{x_lx_{n-k-1}}&-\rho_{x_{m+1}x_{n-k-1}}&\dots&\rho(x_{n-k-1})&\rho_{a_0x_{n-k-1}}\\
\end{array}
\right],
\end{equation*}
Therefore, we can eliminate the variable $v(x_1)$ from the Dirichlet problem, changing admittances as in the statement of the theorem.
\end{proof}

\begin{corollary}\label{CorSM}
Under the star-mesh transform of the network the effective impedance and effective admittance do not change.
\end{corollary}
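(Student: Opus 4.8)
The plan is to deduce the statement from Theorem~\ref{SM} together with the energy representation of $\mathcal P_{eff}$ in \eqref{differentEqforP}. Write $\Gamma'=(V',\rho',a_0,B)$ for the network obtained by the star-mesh transform, where $V'=V\setminus\{x_1\}$; note that $\rho'>0$ (all the terms in \eqref{rhoSM} are positive) and $\Gamma'$ is again connected (any path through $x_1$ can be rerouted through a new edge), so $\Gamma'$ is a bona fide finite network and the Dirichlet problem \eqref{dirpr} on it has a unique solution $v'$. By Theorem~\ref{SM}, $v'(x)=v(x)$ for every $x\in V'$, where $v$ is the solution of \eqref{dirpr} on $\Gamma$.

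Next I would apply the last equality of \eqref{differentEqforP} to both $\Gamma$ and $\Gamma'$ and compare the two energies edge by edge, using $v'=v$ on $V'$. By hypothesis (2) the only edges of $\Gamma$ touching $x_1$ are $(x_1,x_i)$, $i=\overline{2,m}$, so $\rho(x_1)=\sum_{i=2}^m\rho_{x_1x_i}$, and these edges disappear in $\Gamma'$; among the remaining edges only the $(x_i,x_j)$ with $2\le i<j\le m$ change, by $\rho'_{x_ix_j}-\rho_{x_ix_j}=\rho_{x_1x_i}\rho_{x_1x_j}/\rho(x_1)$ (with the convention $\rho_{x_ix_j}=0$ when there was no such edge). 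Hence
\[
\mathcal P_{eff}(\Gamma)-\mathcal P_{eff}(\Gamma')=\frac12\sum_{i=2}^m(\nabla_{x_1x_i}v)^2\rho_{x_1x_i}-\frac12\sum_{2\le i<j\le m}(\nabla_{x_ix_j}v)^2\,\frac{\rho_{x_1x_i}\rho_{x_1x_j}}{\rho(x_1)}.
\]

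It then remains to show this difference vanishes. The equation $\Delta_\rho v(x_1)=0$ together with hypothesis (2) gives $v(x_1)=\rho(x_1)^{-1}\sum_{i=2}^m v(x_i)\rho_{x_1x_i}$; substituting this value, expanding the squares, and using $\sum_{i=2}^m\rho_{x_1x_i}=\rho(x_1)$, a direct computation yields
\[
\sum_{i=2}^m\bigl(v(x_1)-v(x_i)\bigr)^2\rho_{x_1x_i}=\sum_{2\le i<j\le m}\bigl(v(x_i)-v(x_j)\bigr)^2\,\frac{\rho_{x_1x_i}\rho_{x_1x_j}}{\rho(x_1)},
\]
which is exactly the identity needed. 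Conceptually this is the Schur-complement fact that minimizing the star energy $\sum_i(t-v(x_i))^2\rho_{x_1x_i}$ in $t$ produces the mesh energy, the minimizer being the above weighted average; one can instead verify it by expanding both sides as quadratic forms in $(v(x_2),\dots,v(x_m))$ and matching coefficients, the cross terms coinciding precisely because $\rho(x_1)v(x_1)=\sum_i v(x_i)\rho_{x_1x_i}$. Hence $\mathcal P_{eff}(\Gamma)=\mathcal P_{eff}(\Gamma')$, and since $Z_{eff}=1/\mathcal P_{eff}$ with $\mathcal P_{eff}=\tfrac12\sum_{x\sim y}(\nabla_{xy}v)^2\rho_{xy}>0$ (the graph is connected and $v$ is non-constant, as $v(a_0)=1\ne0=v(a_1)$), also $Z_{eff}(\Gamma)=Z_{eff}(\Gamma')$.

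The one point that needs care is the displayed algebraic identity; everything else is bookkeeping. I would also stress that no case distinction on whether $a_0$ or the vertices of $B$ occur among $x_2,\dots,x_m$ is necessary: \eqref{differentEqforP} is a sum over \emph{all} edges and applies verbatim to $\Gamma'$ with its own Dirichlet solution $v'$, so the edge-by-edge comparison above is oblivious to which of the $x_i$ are boundary vertices. (Alternatively one could avoid the energy formula and compute $\mathcal P_{eff}$ directly from $\sum_{x\sim a_0}(1-v(x))\rho_{xa_0}$, splitting into the case $a_0\notin\{x_2,\dots,x_m\}$, which is immediate since no edge at $a_0$ changes, and the case $a_0\in\{x_2,\dots,x_m\}$, where the same use of $\Delta_\rho v(x_1)=0$ closes the argument; the energy route is cleaner because it is symmetric in the $x_i$.)
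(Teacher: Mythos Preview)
Your proof is correct and takes a genuinely different route from the paper's. The paper works with the expression $\mathcal P_{eff}=-\Delta_\rho v(a_0)=\sum_{x}(1-v(x))\rho_{xa_0}$ and is therefore forced into a case analysis according to whether $a_0$ (and vertices of $B$) lie among $x_2,\dots,x_m$; in the nontrivial case it substitutes $v(x_1)=\rho(x_1)^{-1}\sum_i v(x_i)\rho_{x_1x_i}$ (read off from the first row of $\bar A$ in the proof of Theorem~\ref{SM}) into $\sum_x(1-v(x))\rho_{xa_0}$ and cancels term by term. You instead use the Dirichlet-energy form of $\mathcal P_{eff}$ from \eqref{differentEqforP}, which is symmetric in all vertices and makes the boundary cases disappear; the entire argument collapses to the single weighted-variance identity you display, which is indeed the Schur-complement/energy-minimization fact you name. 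Your route is cleaner and more conceptual; the paper's has the minor advantage of staying close to the matrix computation already carried out in Theorem~\ref{SM}.

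One cosmetic slip: in your first displayed difference the factors $\tfrac12$ should be dropped. In the paper's convention $\tfrac12\sum_{x\sim y}$ runs over \emph{ordered} pairs, so after halving each edge contributes exactly once; hence the edge-by-edge difference is $\sum_{i=2}^m(\nabla_{x_1x_i}v)^2\rho_{x_1x_i}-\sum_{2\le i<j\le m}(\nabla_{x_ix_j}v)^2\rho_{x_1x_i}\rho_{x_1x_j}/\rho(x_1)$ with no $\tfrac12$. Since the spurious factor is common to both terms, this does not affect the argument or the identity you then prove.
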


\begin{proof}
In the proof we will use the notations from the proof of the Theorem \ref{SM}.

The case $\{x_1,\dots, x_m\}\cap B_0=\emptyset$ is trivial. The cases, when $\{x_1,\dots, x_m\}\cap B=\emptyset$ or $\{x_1,\dots, x_m\}\cap \{a_0\}=\emptyset$ are obvious, due to \eqref{differentEqforP}.\\

Otherwise, we can assume, without loss of generality, that 
\begin{equation}
x_{m-j}=a_j,j=\overline{0,|\{x_1,\dots, x_m\}\cap B_0|},
\end{equation}
in particular, $x_m=a_0$.
Then, if we denote the new network by $\Gamma'$ we have by \eqref{differentEqforP}
\begin{align*}
\mathcal P_{eff}(\Gamma)=&-\Delta_\rho v(a_0)=\sum_{x\ne a_0}(1-v(x))\rho_{x a_0}\\
=&(1-v(x_1))\rho_{x_1 a_0}+\sum_{i=2}^{m-1} (1-v(x_i))\rho_{x_ia_0}+\sum_{x\not\in\{x_1,\dots,x_m\}}(1-v(x))\rho_{xa_0}\\
=&\mathcal P_{eff}(\Gamma')-\sum_{i=2}^{m-1} (1-v(x_i))\rho'_{x_ia_0}+(1-v(x_1))\rho_{x_1 a_0}+\sum_{i=2}^{m-1} (1-v(x_i))\rho_{x_ia_0}\\
=&\mathcal P_{eff}(\Gamma')-\sum_{i=2}^{m-1} (1-v(x_i))\frac{\rho_{x_1a_0}\rho_{x_1x_i}}{\rho(x_1)}+(1-v(x_1))\rho_{x_1 a_0}\\
=&\mathcal P_{eff}(\Gamma')-\sum_{i=2}^{m-1} (1-v(x_i))\frac{\rho_{x_1a_0}\rho_{x_1x_i}}{\rho(x_1)}+\left(1-\sum_{i=2}^{m-1}v(x_i)\frac{\rho_{x_1x_i}}{\rho(x_1)}-\frac{\rho_{x_1a_0}}{\rho(x_1)}\right)\rho_{x_1 a_0}\\
=&\mathcal P_{eff}(\Gamma')-\rho_{x_1a_0}\sum_{i=2}^{m-1} (1-v(x_i))\frac{\rho_{x_1x_i}}{\rho(x_1)}+\left(1-\sum_{i=2}^{m-1}v(x_i)\frac{\rho_{x_1x_i}}{\rho(x_1)}-\frac{\rho_{x_1a_0}}{\rho(x_1)}\right)\rho_{x_1 a_0}\\
=&\mathcal P_{eff}(\Gamma')-\rho_{x_1a_0}\left(\sum_{i=2}^{m-1} \frac{\rho_{x_1x_i}}{\rho(x_1)}-\sum_{i=2}^{m-1} v(x_i)\frac{\rho_{x_1x_i}}{\rho(x_1)}\right)+\left(1-\sum_{i=2}^{m-1}v(x_i)\frac{\rho_{x_1x_i}}{\rho(x_1)}-\frac{\rho_{x_1a_0}}{\rho(x_1)}\right)\rho_{x_1 a_0}\\
=&\mathcal P_{eff}(\Gamma')-\rho_{x_1a_0}\left(1-\frac{\rho_{x_1x_m}}{\rho(x_1)}-\sum_{i=2}^{m-1} v(x_i)\frac{\rho_{x_1x_i}}{\rho(x_1)}\right)+\left(1-\sum_{i=2}^{m-1}v(x_i)\frac{\rho_{x_1x_i}}{\rho(x_1)}-\frac{\rho_{x_1a_0}}{\rho(x_1)}\right)\rho_{x_1 a_0}\\
=&\mathcal P_{eff}(\Gamma')
\end{align*}
since 
\begin{equation*}
\rho(x_1)=\sum_{i=2}^m\rho_{x_1x_i}\mbox{ and }v(x_1)=\sum_{i=2}^{l}v(x_i)\frac{\rho_{x_1x_i}}{\rho(x_1)}+\frac{\rho_{x_1a_0}}{\rho(x_1)}=\sum_{i=2}^{m-1}v(x_i)\frac{\rho_{x_1x_i}}{\rho(x_1)}+\frac{\rho_{x_1a_0}}{\rho(x_1)}
\end{equation*}
(see the first line of $\bar A$ and note that $v(x_j)=0$ for all $j=\overline{j+1,m-1}$ and $a_0=x_m$).
\end{proof}

Series law and $Y-\Delta$ transform are just particular cases of star-mesh transform. Since multigraphs are not allowed in this paper, we will use a modification of parallel law and call it parallel-series law.

\begin{corollary}{\bf(Series law)}\label{SeriesLaw}
Let $\Gamma = (V,\rho,a_0,B)$  be a finite network, $B_0=B\cup\{a_0\}$. Let $a,b,c\in V$ are such, that 
\begin{enumerate}
\item
$b\not\in B_0$,
\item
$a\not\sim c$, $a\sim b$, $b\sim c$,
\item
$b\not\sim x$ for all $x\not \in \{a,c\}$.
\end{enumerate}
If one removes the vertex $b$, edges $(a,b),(b,c)$ and add the edge $(a,c)$ with the addmittance
\begin{equation*}
\rho'_{ac}=\frac{\rho_{ab}\rho_{ac}}{\rho_{ab}+\rho_{ac}},
\end{equation*}
not changing other admittances, then for the new network 
the solution of the Dirichlet problem \eqref{dirpr} for all the vertices will be the same as the solution of the Dirichlet problem \eqref{dirpr} on the original network at corresponding vertices. The effective impedance (admittance) of the new network coincides with the effective impedance (addmittance) of the original one.
\end{corollary}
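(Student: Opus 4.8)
The plan is to obtain this corollary as the special case $m=3$ of Theorem~\ref{SM}, combined with Corollary~\ref{CorSM}. Put $x_1=b$, $x_2=a$, $x_3=c$ (these three vertices are distinct, as is implicit in hypotheses (2)--(3)). I first check the two conditions of Theorem~\ref{SM}: condition (1), $x_1\notin B_0$, is hypothesis (1) of the corollary; condition (2), $y\not\sim x_1$ for all $y\in V\setminus\{x_2,x_3\}=V\setminus\{a,c\}$, is hypothesis (3). Since $|V|\ge 3$, we have $3\le m=3\le n$, so Theorem~\ref{SM} is applicable.

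Next I compute the transformed admittance. Among $x_2,\dots,x_m=a,c$ there is exactly one unordered pair, $\{a,c\}$, so the star-mesh transform changes only the admittance of the edge $(a,c)$, and by \eqref{rhoSM}
\[
\rho'_{ac}=\rho_{ac}+\frac{\rho_{ba}\rho_{bc}}{\rho(b)}.
\]
Hypothesis (2) gives $a\not\sim c$, i.e.\ $\rho_{ac}=0$, while hypotheses (2)--(3) say that the only edges incident to $b$ are $(a,b)$ and $(b,c)$, so $\rho(b)=\rho_{ab}+\rho_{bc}$. Substituting and using symmetry of $\rho$,
\[
\rho'_{ac}=\frac{\rho_{ab}\rho_{bc}}{\rho_{ab}+\rho_{bc}},
\]
which is the formula in the statement. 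Since $\rho_{ab},\rho_{bc}\succ 0$ and $(K,\succ)$ is an ordered field, $\rho'_{ac}\succ 0$, so after deleting $b$ together with its two edges and inserting the edge $(a,c)$ with this weight we again have a network: local finiteness is clear, and connectedness is preserved because any former path through $b$ traversed the segment $a-b-c$, which can now be rerouted along $(a,c)$.

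The two remaining assertions are then immediate. That the solution of the Dirichlet problem \eqref{dirpr} agrees, at every surviving vertex, with the solution on the original network is exactly the conclusion of Theorem~\ref{SM} in this instance; that $Z_{eff}$ and $\mathcal P_{eff}$ are unchanged is Corollary~\ref{CorSM}. There is no real obstacle in this proof: the only steps needing a word of care are the evaluation $\rho(b)=\rho_{ab}+\rho_{bc}$, which relies on hypothesis (3) to rule out any further edge at $b$, and the positivity of $\rho'_{ac}$ in the ordered field, both of which are routine.
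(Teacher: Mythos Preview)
Your proof is correct and follows exactly the paper's approach: apply Theorem~\ref{SM} and Corollary~\ref{CorSM} with $x_1=b$, $m=3$, and $\rho_{ac}=0$. Your write-up is simply more detailed than the paper's one-line proof, spelling out the computation of $\rho(b)$ and the verification that the resulting structure is again a network.
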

\begin{remark}
The corresponding equation for impedances is then
\begin{equation*}
z'_{ac}={z_{ab}+z_{ac}},
\end{equation*}
which corresponds to the well-known physical series law.
\end{remark}

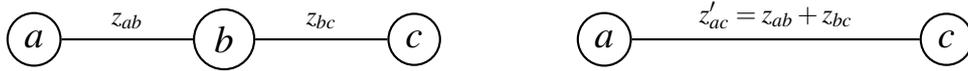
\begin{figure}[H]
\begin{tikzpicture}[auto,node distance=2.5cm,
                    thick,main node/.style={circle,draw,font=\sffamily\Large\bfseries}]

  \node[main node] (1) {$a$};
  \node[main node] (2) [right of=1] {$b$};
  \node[main node] (3) [right of=2] {$c$};

  \path[every node/.style={font=\sffamily\small}]
    (1) edge node [bend right] {$z_{ab}$} (2)
    (2) edge node [bend right] {$z_{bc}$} (3);

  \node[main node] (10) [right of=3]{$a$};
  \node[main node] (20) [right of=10][node distance=4.5cm]  {$c$};

  \path[every node/.style={font=\sffamily\small}]
    (10) edge node [bend right]{$z'_{ac}=z_{ab}+z_{bc}$} (20);
\end{tikzpicture}
\caption{Series law}
\end{figure}

\begin{proof}
Apply Theorem \ref{SM} and Corollary \ref{CorSM} ($x_1=b$) for the case $m=3$ and $\rho_{ac}=0$.
\end{proof}

\begin{corollary}{\bf(Parallel-series law)}\label{SPLaw}
Let $\Gamma = (V,\rho,a_0,B )$  be a finite network, $B_0=B\cup\{a_0\}$.\\ Let $a,b,c\in V$ are such, that 
\begin{enumerate}
\item
$b\not \in B_0$,
\item
$a\sim b,b\sim c, a\sim c$,
\item
$b\not \sim x$ for all $x\not \in \{a,c\}$.
\end{enumerate}
Then if one removes the vertex $b$, edges $(a,b),(b,c)$ and add the edge $(a,c)$ with the admittance
\begin{equation*}
\rho'_{ac}=\frac{\rho_{ab}\rho_{bc}}{\rho_{ab}+\rho_{bc}}+\rho_{ac},
\end{equation*}
not changing other admittances, then for the new network 
the solution of the Dirichlet problem \eqref{dirpr}  for all the vertices will be the same as the solution of the Dirichlet problem \eqref{dirpr} on the original network for corresponding vertices. The effective impedance (admittance) of the new network coincides with the effective impedance (admittance) of the original one.
\end{corollary}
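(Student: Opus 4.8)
The plan is to obtain this corollary as the special case $m=3$ of the star-mesh transform (Theorem \ref{SM}), combined with Corollary \ref{CorSM} for the invariance of the effective impedance and admittance. Thus nothing genuinely new has to be proved; the work consists in checking that the hypotheses of Theorem \ref{SM} hold here and that the transformation formula \eqref{rhoSM} specializes to the formula in the statement.

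First I would set $x_1=b$, $x_2=a$, $x_3=c$, so $m=3$. Hypothesis (1) of the corollary, $b\notin B_0$, is precisely hypothesis (1) of Theorem \ref{SM} ($x_1\notin B_0$). Hypothesis (3) of the corollary, that $b\not\sim x$ for all $x\notin\{a,c\}$, is precisely hypothesis (2) of Theorem \ref{SM}, namely $y\not\sim x_1$ for all $y\in V\setminus\{x_2,\dots,x_m\}=V\setminus\{a,c\}$. Hypothesis (2) of the corollary ($a\sim b$, $b\sim c$, $a\sim c$) is not actually needed to invoke the theorem; it merely records that the edge $(a,c)$ already exists, so that the conclusion is a genuine modification of an admittance rather than the creation of a new edge. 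Since hypothesis (3) says the only neighbours of $b$ are $a$ and $c$, the weight of $x_1$ is $\rho(x_1)=\rho(b)=\rho_{ba}+\rho_{bc}=\rho_{ab}+\rho_{bc}$, which is a nonzero (indeed positive) element of $K$ because $\rho$ is positive on edges, so the divisions below are legitimate.

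Next I would substitute into \eqref{rhoSM} with $i=2$, $j=3$: the only new admittance produced by the star-mesh transform is
\begin{equation*}
\rho'_{ac}=\rho_{ac}+\frac{\rho_{x_1x_2}\rho_{x_1x_3}}{\rho(x_1)}=\rho_{ac}+\frac{\rho_{ab}\rho_{bc}}{\rho_{ab}+\rho_{bc}},
\end{equation*}
which is exactly the admittance claimed in the statement. By Theorem \ref{SM} the solution of the Dirichlet problem \eqref{dirpr} on the new network agrees with the solution on $\Gamma$ at every surviving vertex, and by Corollary \ref{CorSM} the effective impedance and admittance are unchanged, which finishes the proof.

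I do not anticipate a real obstacle here: the only point deserving a moment of care is reading the summation conventions of \eqref{rhoSM} correctly in the degenerate case $m=3$, where there is a single pair $\{x_i,x_j\}=\{a,c\}$ and hence a single modified admittance, and noting that $\rho_{ab}+\rho_{bc}\neq 0$ so that the quotient makes sense over the ordered field $K$.
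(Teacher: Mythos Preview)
Your proposal is correct and is exactly the approach the paper takes: the paper's proof is the single line ``Apply Theorem \ref{SM} and Corollary \ref{CorSM} ($x_1=b$) for the case $m=3$.'' Your write-up simply spells out the verification of the hypotheses and the specialization of \eqref{rhoSM} that the paper leaves implicit.
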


\begin{remark}
The corresponding equation for impedances is then
\begin{equation*}
\frac{1}{z'_{ac}}=\frac{1}{z_{ab}+z_{bc}}+\frac{1}{z_{ac}}
\end{equation*}
which corresponds to the application of the physical series law and then the physical parallel law.
\end{remark}

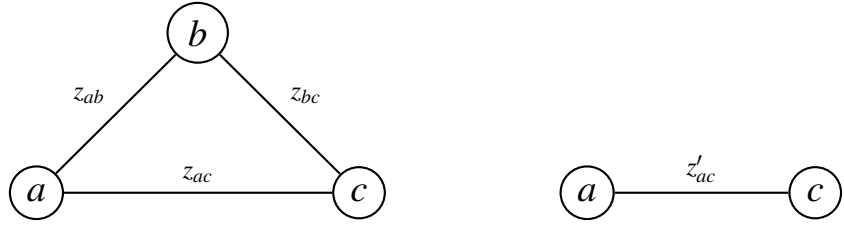
\begin{figure}[H]
\begin{tikzpicture}[auto,node distance=3cm,
                    thick,main node/.style={circle,draw,font=\sffamily\Large\bfseries}]

  \node[main node] (1) {$a$};
  \node[main node] (2) [above right of=1] {$b$};
  \node[main node] (3) [below right of=2] {$c$};

  \path[every node/.style={font=\sffamily\small}]
    (1) edge node [bend right] {$z_{ab}$} (2)
    (2) edge node [bend right] {$z_{bc}$} (3)
    (1) edge node [bend right] {$z_{ac}$} (3);

  \node[main node] (10) [right of=3] {$a$};
  \node[main node] (20) [right of=10] {$c$};

  \path[every node/.style={font=\sffamily\small}]
    (10) edge node [bend right] {$z'_{ac}$} (20);
\end{tikzpicture}
\caption{Parallel-series law}
\end{figure}

\begin{proof}
Apply Theorem \ref{SM} and Corollary \ref{CorSM} ($x_1=b$) for the case $m=3$.
\end{proof}

\begin{theorem}{\bf($Y-\Delta$ transform)}\label{YDLaw}
Let $\Gamma = (V,\rho,a_0,B)$  be a finite network, $B_0=B\cup\{a_0\}$. Let $a,b,c,d\in V$ are such, that 
\begin{enumerate}
\item 
$d\not \in B_0$,
\item
$d \sim a,d\sim b,d\sim c$,
\item
$d\not \sim x$ for all $x\not \in \{a,b,c\}$.
\end{enumerate}

If one removes the vertex $d$, edges $(d,a),(d,b),(d,c)$ and set 
\begin{equation}\label{YD}
\begin{split}
{\rho'_{ab}}=&\frac{\rho_{da}\rho_{db}}{\rho_{da}+\rho_{db}+\rho_{dc}}+\rho_{ab},\\
{\rho'_{bc}}=&\frac{\rho_{db}\rho_{dc}}{\rho_{da}+\rho_{db}+\rho_{dc}}+\rho_{bc},
\\
{\rho'_{ac}}=&\frac{\rho_{da}\rho_{dc}}{\rho_{da}+\rho_{db}+\rho_{dc}}+\rho_{ac},
\end{split}
\end{equation}
not changing other admittances, then for the new network 
the solution of the Dirichlet problem \eqref{dirpr} for all the vertices will be the same as the solution of the Dirichlet problem \eqref{dirpr} on the original network for the corresponding vertices. The effective impedance (admittance) of the new network coincides with the effective impedance (admittance) of the original one.
\end{theorem}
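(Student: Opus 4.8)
The plan is to recognize Theorem \ref{YDLaw} as the instance $m=4$ of the star-mesh transform. Concretely, I would apply Theorem \ref{SM} and Corollary \ref{CorSM} with $x_1 = d$ and $(x_2,x_3,x_4) = (a,b,c)$. Hypotheses (1)--(3) of Theorem \ref{YDLaw} translate verbatim into the two conditions of Theorem \ref{SM}: $d \not\in B_0$ is condition (1), while $d \sim a,\,d\sim b,\,d \sim c$ together with $d \not\sim x$ for $x \notin \{a,b,c\}$ gives both that $d$ is adjacent only to $x_2,x_3,x_4$ (so the star has exactly these three legs) and condition (2) that $y \not\sim x_1$ for $y \in V \setminus \{x_2,x_3,x_4\}$.

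The one small computation is to check that the generic admittance-update rule \eqref{rhoSM} specializes to \eqref{YD}. Since $d$ is adjacent only to $a,b,c$, its vertex weight is $\rho(x_1) = \rho(d) = \rho_{da} + \rho_{db} + \rho_{dc}$. Substituting this and the three pairs $(x_i,x_j) \in \{(a,b),(b,c),(a,c)\}$ into $\rho'_{x_ix_j} = \rho_{x_ix_j} + \frac{\rho_{x_1x_i}\rho_{x_1x_j}}{\rho(x_1)}$ reproduces the three formulas in \eqref{YD} exactly. (The edges $(a,b),(b,c),(a,c)$ need not have been present originally; in that case the corresponding $\rho_{x_ix_j}=0$ and the transform simply creates them, which is consistent with Theorem \ref{SM}.)

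With the hypotheses matched and the formulas identified, Theorem \ref{SM} gives that the solution of the Dirichlet problem \eqref{dirpr} on the reduced network agrees with the solution on the original network at all the retained vertices, and Corollary \ref{CorSM} gives that the effective impedance and effective admittance are unchanged. There is essentially no obstacle here; the only point requiring care is bookkeeping for the vertex weight $\rho(d)$ and the observation that no new adjacencies outside $\{a,b,c\}$ are created, which is exactly why hypothesis (3) is imposed.
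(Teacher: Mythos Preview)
Your proposal is correct and matches the paper's own proof exactly: the paper simply invokes Theorem \ref{SM} and Corollary \ref{CorSM} with $x_1=d$ in the case $m=4$. Your additional verification that \eqref{rhoSM} specializes to \eqref{YD} via $\rho(d)=\rho_{da}+\rho_{db}+\rho_{dc}$ is a welcome elaboration but not a different approach.
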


\begin{remark}
The corresponding equalities for the impedances are
\begin{equation}
\begin{split}
{z'_{ab}}=&{\frac{z_{dc}}{z_{da}z_{db}+z_{db}z_{dc}+z_{da}z_{dc}}+\frac{1}{z_{ab}}},\\
{z'_{bc}}=&{\frac{z_{da}}{z_{da}z_{db}+z_{db}z_{dc}+z_{da}z_{dc}}+\frac{1}{z_{bc}}},
\\
{z'_{ac}}=&{\frac{z_{db}}{z_{da}z_{db}+z_{db}z_{dc}+z_{da}z_{dc}}+\frac{1}{z_{ac}}}.
\end{split}
\end{equation}
From the physical point of view, if $\rho_{ab},\rho_{bc},\rho_{ac}$ are all equal to zero, then it is just $Y-\Delta$ transform,
otherwise, it is $Y-\Delta$ transform and the parallel law.
\end{remark}

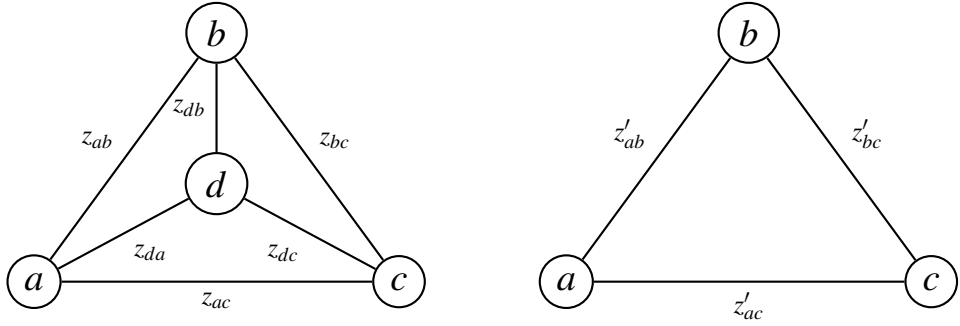
\begin{figure}[H]
\begin{tikzpicture}[auto,node distance=4cm,
                    thick,main node/.style={circle,draw,font=\sffamily\Large\bfseries}]

  \node[main node] at (-0.4,-3.3) (1) {$a$} ;
  \node[main node] at (2,0) (2){$b$};
  \node[main node] at (4.4,-3.3) (3) {$c$};
  \node[main node] at (2,-2) (4) {$d$};

  \path[every node/.style={font=\sffamily\small}]
    (4) edge node [bend right] {$z_{da}$} (1)
    (4) edge node [bend right] {$z_{db}$} (2)
    (3) edge node [bend right] {$z_{dc}$} (4)
    (1) edge node [bend right] {$z_{ab}$} (2)
    (2) edge node [bend right] {$z_{bc}$} (3)
    (3) edge node [bend right] {$z_{ac}$} (1);

  \node[main node] at (6.6,-3.3) (1) {$a$} ;
  \node[main node] at (9,0) (2){$b$};
  \node[main node] at (11.4,-3.3) (3) {$c$};

  \path[every node/.style={font=\sffamily\small}]
    (1) edge node [bend right] {$z'_{ab}$} (2)
    (2) edge node [bend right] {$z'_{bc}$} (3)
    (3) edge node [bend right] {$z'_{ac}$} (1);

\end{tikzpicture}
\caption{$Y-\Delta$ transform} 
\end{figure}

\begin{proof}
Theorem \ref{SM} and Corollary \ref{CorSM} ($x_1=d$) for the case $m=4$.
\end{proof}

The $Y-\Delta$ transform is invertible. In general, it is not the case for star-mesh transform.

\begin{theorem}{\bf($\Delta-Y$ transform)}\label{DYLaw}
Let $\Gamma' = (V',\rho',a_0,B)$  be a finite network and let $a,b,c\in V$ are such, that 
$a \sim b,b \sim c$, and $a\sim c$.
If one add a vertex $d$ and edges $(d,a),(d,b),(d,c)$ setting

\begin{align*}
\rho_{da}&=\frac{\rho'_{ac}\rho'_{bc}+\rho'_{ac}\rho'_{ab}+\rho'_{ab}\rho'_{bc}}{\rho'_{bc}},
\\
\rho_{db}&=\frac{\rho'_{ac}\rho'_{bc}+\rho'_{ac}\rho'_{ab}+\rho'_{ab}\rho'_{bc}}{\rho'_{ac}},
\\
\rho_{dc}&=\frac{\rho'_{ac}\rho'_{bc}+\rho'_{ac}\rho'_{ab}+\rho'_{ab}\rho'_{bc}}{\rho'_{ab}},
\end{align*}
and remove the edges $(a,b),(b,c),(a,c)$
not changing other admittances, then for the new network 
\begin{equation*}
\Gamma=(V\cup\{d\},\rho,a_0,B),
\end{equation*}
the solution of the Dirichlet problem \eqref{dirpr} for any vertex in $V$ will be the same as the solution of the Dirichlet problem \eqref{dirpr} on the original network  for the corresponding vertex. Moreover, the effective impedance and effective admittance do not change under this transform.
\end{theorem}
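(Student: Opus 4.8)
\section*{Proof proposal for Theorem \ref{DYLaw}}

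The plan is to recognize the $\Delta-Y$ transform as the \emph{inverse} of the $Y-\Delta$ transform of Theorem~\ref{YDLaw}, and then to read the conclusion off from that theorem rather than redoing any Dirichlet‑problem analysis. First I would check that $\Gamma=(V\cup\{d\},\rho,a_0,B)$ really is a network over $(K,\succ)$: the added vertex $d$ has degree $3$, so the graph stays locally finite; replacing the triangle on $a,b,c$ by the three edges $(d,a),(d,b),(d,c)$ keeps the graph connected, since $a,b,c$ remain joined through $d$; we have $d\notin B_0$ because $B_0$ is contained in the old vertex set; and the prescribed admittances $\rho_{da},\rho_{db},\rho_{dc}$ are positive, being quotients of sums of products of the positive elements $\rho'_{ab},\rho'_{bc},\rho'_{ac}$.

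Next I would verify that applying the $Y-\Delta$ transform of Theorem~\ref{YDLaw} to $\Gamma$, at the vertex $d$, with the (degenerate) triangle admittances $\rho_{ab}=\rho_{bc}=\rho_{ac}=0$ — which are exactly the current admittances in $\Gamma$, since those edges have been removed — returns precisely the original network $\Gamma'$. Writing $P=\rho'_{ab}$, $Q=\rho'_{bc}$, $R=\rho'_{ac}$ and $T=PQ+QR+RP$, the prescribed star admittances are $\rho_{da}=T/Q$, $\rho_{db}=T/R$, $\rho_{dc}=T/P$, so that
\[
\rho_{da}+\rho_{db}+\rho_{dc}=T\Bigl(\tfrac1P+\tfrac1Q+\tfrac1R\Bigr)=\frac{T^2}{PQR},
\]
and hence
\[
\frac{\rho_{da}\rho_{db}}{\rho_{da}+\rho_{db}+\rho_{dc}}=\frac{T^2/(QR)}{T^2/(PQR)}=P=\rho'_{ab},
\]
and, symmetrically, $\dfrac{\rho_{db}\rho_{dc}}{\rho_{da}+\rho_{db}+\rho_{dc}}=Q=\rho'_{bc}$ and $\dfrac{\rho_{da}\rho_{dc}}{\rho_{da}+\rho_{db}+\rho_{dc}}=R=\rho'_{ac}$. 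Thus formulas \eqref{YD} with $\rho_{ab}=\rho_{bc}=\rho_{ac}=0$ reproduce $\rho'_{ab},\rho'_{bc},\rho'_{ac}$; since no other admittances are altered and the vertex $d$ is deleted, the $Y-\Delta$ transform of $\Gamma$ is literally the network $\Gamma'$.

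Finally, the hypotheses of Theorem~\ref{YDLaw} for $\Gamma$ at the vertex $d$ are all met — $d\notin B_0$, $d\sim a$, $d\sim b$, $d\sim c$, and $d\not\sim x$ for every $x\notin\{a,b,c\}$, by construction — so that theorem applies and tells us that the solution of the Dirichlet problem \eqref{dirpr} on its $Y-\Delta$ transform, namely on $\Gamma'$, coincides at every vertex of $V$ with the solution on $\Gamma$, and that $Z_{eff}(\Gamma')=Z_{eff}(\Gamma)$ and $\mathcal P_{eff}(\Gamma')=\mathcal P_{eff}(\Gamma)$. Reading these three equalities in the opposite direction is exactly the assertion of the present theorem. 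I do not expect a serious obstacle here: the only points needing care are checking that $\Gamma$ is a bona fide connected, positively weighted network and that the side conditions on $d$ in Theorem~\ref{YDLaw} hold, while the identities above are the routine algebraic core confirming that the two transforms are mutually inverse.
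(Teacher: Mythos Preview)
Your proposal is correct and follows essentially the same strategy as the paper: both arguments establish that the $\Delta$--$Y$ transform is the inverse of the $Y$--$\Delta$ transform of Theorem~\ref{YDLaw} (the paper by solving the system \eqref{YD} for $\rho_{da},\rho_{db},\rho_{dc}$, you by directly plugging the stated formulas back into \eqref{YD}), and then conclude by invoking Theorem~\ref{YDLaw}. Your additional checks that $\Gamma$ is a valid network and that the hypotheses on $d$ in Theorem~\ref{YDLaw} are satisfied are more explicit than in the paper, but the logical skeleton is the same.
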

\begin{remark}
The corresponding equalities for the impedances are
\begin{align*}\label{zad}
z_{da}&=\frac{z'_{ab}z'_{ac}}{z'_{ab}+z'_{bc}+z'_{ac}},
\\
z_{db}&=\frac{z'_{ab}z'_{bc}}{z'_{ab}+z'_{bc}+z'_{ac}},
\\
z_{dc}&=\frac{z'_{bc}z'_{ac}}{z'_{ab}+z'_{bc}+z'_{ac}}.
\end{align*}
\end{remark}

\begin{figure}[H]
\begin{tikzpicture}[auto,node distance=4cm,
                    thick,main node/.style={circle,draw,font=\sffamily\Large\bfseries}]

  \node[main node] at (-0.4,-3.3) (1) {$a$} ;
  \node[main node] at (2,0) (2){$b$};
  \node[main node] at (4.4,-3.3) (3) {$c$};

  \path[every node/.style={font=\sffamily\small}]
    (1) edge node [bend right] {$z'_{ab}$} (2)
    (2) edge node [bend right] {$z'_{bc}$} (3)
    (3) edge node [bend right] {$z'_{ac}$} (1);

  \node[main node] at (6.6,-3.3) (1) {$a$} ;
  \node[main node] at (9,0) (2){$b$};
  \node[main node] at (11.4,-3.3) (3) {$c$};
  \node[main node] at (9,-2) (4){$d$};

  \path[every node/.style={font=\sffamily\small}]
    (4) edge node [bend right] {$z_{da}$} (1)
    (4) edge node [bend right] {$z_{db}$} (2)
    (3) edge node [bend right] {$z_{dc}$} (4);

\end{tikzpicture}
\caption{$\Delta-Y$ transform} 
\end{figure}
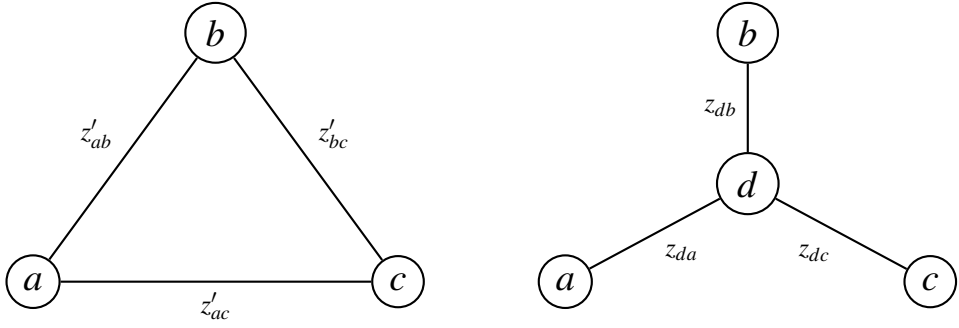

\begin{proof}
To prove the theorem it is enough to express $\rho_{da}$, $\rho_{db}$ and $\rho_{dc}$ from \eqref {YD}, assuming $\rho_{ab}=0$, $\rho_{bc}=0$, and $\rho_{ac}=0$.
Summing up the inverses of all three equations one obtains
\begin{equation*}
\frac{1}{\rho'_{ab}}+\frac{1}{\rho'_{bc}}+\frac{1}{\rho'_{ac}}=\frac{(\rho_{ab}+\rho_{bc}+\rho_{ac})^2}{\rho_{da}\rho_{db}\rho_{dc}}
\end{equation*}
Since both sides are strictly positive, the last equation is equivalent to
\begin{equation}\label{eq1}
\frac{\rho'_{ab}\rho'_{bc}\rho'_{ac}}{\rho'_{ab}\rho'_{bc}+\rho'_{bc}\rho'_{ac}+\rho'_{ab}\rho'_{ac}}=\frac{\rho_{da}\rho_{db}\rho_{dc}}{(\rho_{ab}+\rho_{bc}+\rho_{ac})^2}
\end{equation}
Multiplying the both sides of the last equation by 
\begin{equation*}
\frac{1}{\rho'_{ab}\rho'_{ac}}=\frac{(\rho_{ab}+\rho_{bc}+\rho_{ac})^2}{\rho_{da}^2\rho_{db}\rho_{dc}},
\end{equation*}
which follows from \eqref {YD},
we get
\begin{equation*}
\frac{\rho'_{bc}}{\rho'_{ab}\rho'_{bc}+\rho'_{bc}\rho'_{ac}+\rho'_{ab}\rho'_{ac}}=\frac{1}{\rho_{da}}.
\end{equation*}
Then the equation for $\rho_{da}$ follows.
To obtain the equations for $\rho_{db}$ and $\rho_{dc}$ one should multiply \eqref{eq1} by $\frac{1}{\rho'_{ab}\rho'_{bc}}$ and $\frac{1}{\rho'_{ac}\rho'_{bc}}$ respectively.\\

The fact that effective impedance and effective admittance do not change follows from Theorem \ref{YDLaw}.
\end{proof}
\begin{remark}
All described in this section transforms preserve the positivity of admittances and impedances on the edges.
\end{remark}

\section{Effective impedance of infinite networks over an ordered field}

\subsection{Infinite networks with zero potential on infinity}
Let $\Gamma=(V,\rho,a_0,B)$ be an infinite network over an ordered field $(K,\succ)$. Let us consider the sequence of finite graphs $(V_n, \left.\rho\right|_{V_n})$, where $V_n=\{x\in V\mid\dist (a_0,x)\le n\}$, $n\in\Bbb N$. 

We denote by
\begin{equation*}
\partial V_n=\{x\in V\mid\dist (a_0,x)= n\}
\end{equation*}
 the \emph{boundary} of the graph $(V_n,\left.\rho\right|_{V_n})$. 
Note that $V_{n+1}= \partial V_{n+1}\cup V_n$. 

Let us denote $B_n=B\cap V_n$. Then 
\begin{equation*}
\Gamma_n=(V_n,\left.\rho\right|_{V_n},a_0,B_n\cup \partial V_n),n\in\Bbb N
\end{equation*}
is a \emph{sequence of finite networks exhausted the infinite network $\Gamma$}.

This is an analogue to the approach to infinite networks in \cite{Grimmett}. 

Let us consider the Dirichlet problem \eqref{dirpr} on each $\Gamma_n$: 
\begin{equation}\label{DirprFin}
 \begin{cases}
\sum_{y: y\sim x}(v^{(n)}(y)-v^{(n)}(x))\rho_{xy}=0 \mbox { on } V_n\setminus (\partial V_n\cup B_n\cup\{a_0\}),
\\

   v^{(n)}(x)=0\mbox { on }  \partial V_n\cup B_n.
\\
 v^{(n)}(a_0)=1,

 \end{cases}
\end{equation}

\begin{theorem}\label{Thmmonotonicity}
\begin{equation}\label{monotonicity}
\mathcal P_{eff}(\Gamma_{n+1})\preceq \mathcal P_{eff}(\Gamma_n).
\end{equation}
\end{theorem}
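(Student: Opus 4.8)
The plan is to establish a \emph{Dirichlet (variational) principle} valid over the ordered field $K$, and then to compare $\Gamma_{n+1}$ with $\Gamma_n$ by feeding the solution on $\Gamma_n$, extended by $0$, as a competitor into the variational principle for $\Gamma_{n+1}$.

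\emph{Step 1: a Dirichlet principle over $K$.} For a finite network $\Gamma=(V,\rho,a_0,B)$ with $B_0=B\cup\{a_0\}$ put
\[
\mathcal E_\Gamma(f)=\frac12\sum_{\substack{x\sim y\\ x,y\in V}}(\nabla_{xy}f)^2\rho_{xy},
\]
and call $f\colon V\to K$ \emph{admissible} if $f(a_0)=1$ and $f\equiv 0$ on $B$. I would prove that if $v$ solves \eqref{dirpr} then $\mathcal E_\Gamma(f)\succeq\mathcal E_\Gamma(v)$ for every admissible $f$, with equality only for $f=v$. Indeed, writing $f=v+\phi$ we have $\phi\equiv 0$ on $B_0$; expanding, $\mathcal E_\Gamma(v+\phi)$ equals $\mathcal E_\Gamma(v)+\mathcal E_\Gamma(\phi)$ plus a mixed term equal to a scalar multiple of $\sum_x\phi(x)\,\Delta_\rho v(x)$ (summation by parts: $\tfrac12\sum_{x,y}(\nabla_{xy}v)(\nabla_{xy}\phi)\rho_{xy}=-\sum_x\phi(x)\Delta_\rho v(x)$), and this mixed term vanishes since $\Delta_\rho v\equiv 0$ on $V\setminus B_0$ while $\phi\equiv 0$ on $B_0$. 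Hence $\mathcal E_\Gamma(f)=\mathcal E_\Gamma(v)+\mathcal E_\Gamma(\phi)\succeq\mathcal E_\Gamma(v)$, and since $\rho$ is positive and the graph is connected, $\mathcal E_\Gamma(\phi)=0$ forces $\phi$ constant, hence $\phi\equiv 0$. Finally $\mathcal E_\Gamma(v)=\mathcal P_{eff}(\Gamma)$ is precisely the last equality of \eqref{differentEqforP}, so $\mathcal P_{eff}(\Gamma)=\min\{\mathcal E_\Gamma(f):f\text{ admissible}\}$.

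\emph{Step 2: the comparison.} Apply Step 1 to $\Gamma_n$ and $\Gamma_{n+1}$. Let $v^{(n)}$ solve \eqref{DirprFin} on $\Gamma_n$ and define $\tilde v\colon V_{n+1}\to K$ by $\tilde v=v^{(n)}$ on $V_n$ and $\tilde v=0$ on $\partial V_{n+1}$ (recall $V_{n+1}=V_n\cup\partial V_{n+1}$). Then $\tilde v(a_0)=1$, and $\tilde v\equiv 0$ on $B_{n+1}\cup\partial V_{n+1}$: by construction $\tilde v$ vanishes on $\partial V_{n+1}$, and $B_{n+1}\subseteq B_n\cup\partial V_{n+1}$ while $v^{(n)}\equiv 0$ on $B_n\cup\partial V_n$. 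Thus $\tilde v$ is admissible for $\Gamma_{n+1}$. Moreover $\mathcal E_{\Gamma_{n+1}}(\tilde v)=\mathcal E_{\Gamma_n}(v^{(n)})$: since adjacent vertices have graph distances to $a_0$ differing by at most $1$, every edge of $\Gamma_{n+1}$ either has both endpoints in $V_n$ — then it is an edge of $\Gamma_n$ and contributes the same term — or has at least one endpoint in $\partial V_{n+1}$, which forces both endpoints into $\partial V_n\cup\partial V_{n+1}$, where $\tilde v=0$, so it contributes $0$. Combining with Step 1,
\[
\mathcal P_{eff}(\Gamma_{n+1})=\mathcal E_{\Gamma_{n+1}}(v^{(n+1)})\preceq\mathcal E_{\Gamma_{n+1}}(\tilde v)=\mathcal E_{\Gamma_n}(v^{(n)})=\mathcal P_{eff}(\Gamma_n),
\]
which is \eqref{monotonicity}.

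The only point needing care is Step 1: over an ordered field there is no compactness to invoke, so the variational principle must be obtained purely algebraically from the identity $\mathcal E_\Gamma(v+\phi)=\mathcal E_\Gamma(v)+\mathcal E_\Gamma(\phi)$, which is exactly why checking that the mixed term cancels (using the Dirichlet equation on $V\setminus B_0$ and the condition $\phi\equiv 0$ on $B_0$) is the crux; everything afterwards is bookkeeping about which edges occur in $\Gamma_n$ versus $\Gamma_{n+1}$. Conceptually this is the Rayleigh-type statement that enlarging the exhausting network — i.e.\ ``ungrounding'' the old boundary $\partial V_n$ — can only decrease the effective admittance.
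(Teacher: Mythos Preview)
Your proof is correct and follows essentially the same route as the paper: invoke the Dirichlet (variational) principle $\mathcal P_{eff}(\Gamma)=\min_f\mathcal E_\Gamma(f)$ over admissible $f$, then extend the solution $v^{(n)}$ on $\Gamma_n$ by zero to $\partial V_{n+1}$ and use it as a competitor on $\Gamma_{n+1}$. The only differences are cosmetic: the paper cites the Dirichlet/Thomson principle from \cite{Muranova} whereas you reprove it algebraically via the identity $\mathcal E(v+\phi)=\mathcal E(v)+\mathcal E(\phi)$, and your edge-by-edge accounting (showing that any edge of $\Gamma_{n+1}$ not in $\Gamma_n$ has both endpoints in $\partial V_n\cup\partial V_{n+1}$, where $\tilde v=0$) is slightly more explicit than the paper's decomposition of the energy sum.
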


\begin{proof}
By Dirichlet/Thomson’s principle \cite{Muranova} we have
\begin{equation}\label{DTfineq}
\mathcal P_{eff}(\Gamma_{n+1})\preceq \frac{1}{2}\sum_{x,y\in V_{n+1}}(\nabla_{xy} f)^2\rho_{xy}
\end{equation}
 for any $f:V_{n+1}\rightarrow K $ such that $f(a_0)=1, \left.f\right|_{ \partial V_{n+1}\cup B_{n+1}}\equiv 0$. \\
Since $(V_{n+1}\setminus \partial V_{n+1})=V_n$ and $B_{n+1}\cap V_n=B_n$, the inequality \eqref{DTfineq} is true for
\begin{equation*}
f(x)=
\begin{cases}
v^{(n)}(x), \mbox{ if } x\in V_n,\\
0, \mbox{ if } x\in \partial V_{n+1},\\
\end{cases}
\end{equation*}
where $v^{(n)}$ is the solution of \eqref{DirprFin} for $\Gamma_n$. Then
\begin{align*}
\frac{1}{2}\sum_{x,y\in V_{n+1}}(\nabla_{xy} f)^2\rho_{xy}=&\frac{1}{2}\sum_{x,y\in V_{n}}(\nabla_{xy} f)^2\rho_{xy}+\frac{1}{2}\sum_{x,y\in \partial V_{n+1}}(\nabla_{xy} f)^2\rho_{xy}\\
=&\mathcal P_{eff}(\Gamma_n)+0.
\end{align*}
The last equality, together with \eqref{DTfineq}, gives us $\eqref{monotonicity}$.
\end{proof}
\begin{remark}
Even in a Cauchy complete non-Archimedean ordered field inequalities \eqref{monotonicity} for all $n\in\Bbb N$ do not imply, that the sequence $\{\mathcal P_{eff}(\Gamma_n)\}_{n=1}^\infty$ converges. Obviously, if the sequence of effective admittances of finite networks converges, then the corresponding sequence of the effective impedances also has a limit (finite or infinite).
\end{remark}

\begin{definition}
If for given infinite network $\Gamma$ the limit of effective admittances (impedances) of exhausted finite networks exists in $K$, we call it \emph{effective admittance (impedance) of the network $\Gamma$ with zero potential at infinity} and denote it by $\mathcal P_{eff}(\Gamma)$ ($Z_{eff}(\Gamma)$).
\end{definition}

\subsection{Examples: ladder networks over Levi-Civita field}
In this subsection we will investigate the behavior of the sequence $\{\mathcal P_{eff}(\Gamma^{\alpha\beta}_n)\}_{n=1}^\infty$ of effective admittances of finite networks exhausted the ladder network ($\alpha\beta$-network) at the Figure \ref{fig} ($\alpha,\beta \in K$, $\alpha,\beta\succ 0$). More precisely, \emph{$\alpha\beta$-network} is a network $\Gamma^{\alpha\beta}=\{V,\rho, a_0,B\}$, where
\begin{itemize}
\item
$V=\{a_0,a_1,a_2,\dots,x_1,x_2,\dots\}$,
\item 
$\rho_{a_0x_1}=\alpha$, 
$\rho_{x_ix_{i+1}}=\alpha, \rho_{a_ix_i}=\beta, i\in \Bbb N$,
and
$\rho_{xy}=0,\mbox {otherwise}$,
\item
$B=\{a_0,a_1,a_2,\dots\}$.
\end{itemize}
 This network is similar to Feynman's ladder network and $CL$-network (see \cite{Feynman}, \cite{Yoon}), but has zero potential at infinity. Therefore, for any ordered field $K$ the Theorem \ref{Thmmonotonicity} is true for this network. We will show (Theorem \ref{ThmLC} and Example \ref{ExCL}) that whether $\{\mathcal P_{eff}(\Gamma_n)\}_{n=1}^\infty$ converges in Cauchy completness of $K$ depends on $\alpha$ and $\beta$.

\begin{figure}[H]
\begin{tikzpicture}[auto,node distance=2cm,
                    thick,main node/.style={circle,draw,font=\sffamily\small}]
 
  \node[main node] (0) [draw=none]{};
  \node[main node] (1) [right of=0] {$a_1$};
  \node[main node] (2) [right of=1] {$a_2$};
  \node[main node] (3) [right of=2] {$a_k$};
  \node[main node] (4) [right of=3] {$a_{n}$};

  \node[main node] (5) [above of=0]{$a_0$};
  \node[main node] (6) [right of=5] {$x_1$};
  \node[main node] (7) [right of=6] {$x_2$};
  \node[main node] (8) [right of=7] {$x_k$};
  \node[main node] (9) [right of=8] {$x_{n}$};
  \node[main node] (10) [right of=9] [draw=none]{};

  \path[every node/.style={font=\sffamily\small}]
    (5) edge node [bend right] {$\alpha$} (6)
    (6) edge node [bend right] {$\alpha$} (7)
    (9) edge node [bend right] {$\alpha$} (10); 
    \draw[dashed]  (7) to (8);
    \draw[dashed]  (8) to (9);

  \path[every node/.style={font=\sffamily\small}]
    (1) edge node [bend right] {$\beta$} (6) 
    (2) edge node [bend right] {$\beta$} (7)
    (3) edge node [bend right] {$\beta$} (8)
    (4) edge node [bend right] {$\beta$} (9);

\end{tikzpicture}
\caption{$\alpha\beta$-network}\label{fig}
\end{figure}
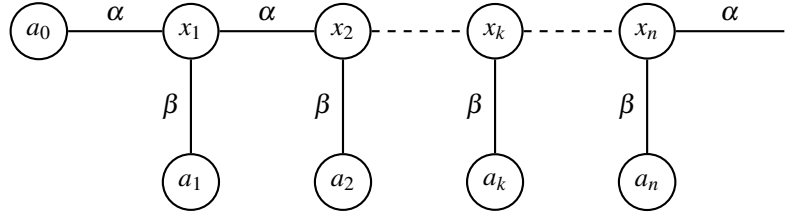

\subsubsection{Finite ladder network over ordered field}
Let $\Gamma_{n}$  be a sequence of finite networks exhausted an $\alpha\beta$-network (see Figure \ref{figFin}).

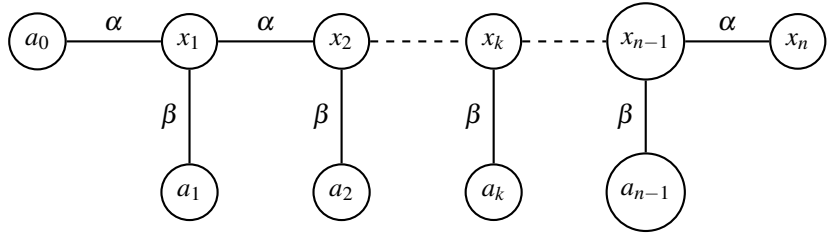
\begin{figure}[H]
\begin{tikzpicture}[auto,node distance=2cm,
                    thick,main node/.style={circle,draw,font=\sffamily\small}]
 
  \node[main node] (0) [draw=none]{};
  \node[main node] (1) [right of=0] {$a_1$};
  \node[main node] (2) [right of=1] {$a_2$};
  \node[main node] (3) [right of=2] {$a_k$};
  \node[main node] (4) [right of=3] {$a_{n-1}$};

  \node[main node] (5) [above of=0]{$a_0$};
  \node[main node] (6) [right of=5] {$x_1$};
  \node[main node] (7) [right of=6] {$x_2$};
  \node[main node] (8) [right of=7] {$x_k$};
  \node[main node] (9) [right of=8] {$x_{n-1}$};
  \node[main node] (10) [right of=9] {$x_{n}$};

  \path[every node/.style={font=\sffamily\small}]
    (5) edge node [bend right] {$\alpha$} (6)
    (6) edge node [bend right] {$\alpha$} (7)
    (9) edge node [bend right] {$\alpha$} (10); 
    \draw[dashed]  (7) to (8);
    \draw[dashed]  (8) to (9);

  \path[every node/.style={font=\sffamily\small}]
    (1) edge node [bend right] {$\beta$} (6) 
    (2) edge node [bend right] {$\beta$} (7)
    (3) edge node [bend right] {$\beta$} (8)
    (4) edge node [bend right] {$\beta$} (9);

\end{tikzpicture}
\caption{Finite ladder network}\label{figFin}
\end{figure}

The Dirichlet problem \eqref{dirpr} for this network is the following
\begin{equation}\label{dirprF}
 \begin{cases}
\alpha v(a_0)+\beta v(a_1)+\alpha v(x_2)-(2\alpha+\beta)v(x_1)=0,
\\
\alpha v(x_{i-1})+\beta v(a_i)+\alpha v(x_{i+1})-(2\alpha+\beta)v(x_i)=0 \mbox { for } i=\overline{2,n-1},
   \\
v(x_n)=0,
\\
   v(a_i)= 0 \mbox { for } i=\overline{1,n-1},
   \\
 v(a_0)=1,
 \end{cases}
\end{equation}

Using the second line in \eqref{dirprF} we obtain the following recurrence relation for $v(x_i), i=\overline{2,n-1}$
\begin{equation}\label{recuk}
v(x_{i+1})-\left(2+\frac{\beta}{\alpha}\right)v(x_i)+v(x_{i-1})=0
\end{equation}
since $v(a_i)=0$ for $i=\overline{2,n-1}$.
The characteristic polynomial of \eqref{recuk} is
\begin{equation}\label{psi}
\psi^2-\left(2+\frac{\beta}{\alpha}\right)\psi+1=0.
\end{equation}
Its roots are 
\begin{equation*}
\psi_{1,2}=1+\frac{\beta}{2\alpha}\pm \xi,
\end{equation*}
where
\begin{equation}\label{xi}
\xi =\sqrt {\frac{\beta}{\alpha}+\left(\frac{\beta}{2\alpha}\right)^2}.
\end{equation}
Note that $\xi$ should not necessary belong to $K$. It is known, that any ordered field posses a real-closed (or maximal ordered) extension $\overline {K}$. Then in $\overline {K}$ exists exactly one positive square root of ${\frac{\beta}{\alpha}+\left(\frac{\beta}{2\alpha}\right)^2}$ (\cite{Bourbaki}). Therefore, we fix the extension $\overline {K}$, denote the positive square root by $\xi$, and make all the further calculations in $\overline {K}$.

The solution of the recurrence equation \eqref{recuk} is
\begin{equation}\label{ukeq}
v(x_i)=c_1\psi_1^i+c_2\psi_2^i,
\end{equation}
where $c_1,c_2\in \overline{K}$ are arbitrary constants.

We use first and third equations in \eqref{dirprF} as boundary conditions for this recurrence equation.
Substituting \eqref{ukeq} in the boundary conditions we obtain the following equations for the constants:
\begin{equation*}\label{const}
\begin{cases}
1+c_1\psi_1^2+c_2\psi_2^2-\left(2+\frac{\beta}{\alpha}\right)(c_1\psi_1+c_2\psi_2)=0,\\
c_1\psi_1^{n}+c_2\psi_2^{n}=0,
\end{cases}
\end{equation*}
which, by \eqref{psi} is equivalent to
\begin{equation*}\label{const}
\begin{cases}
c_1+c_2=1,\\
c_1\psi_1^{n}+c_2\psi_2^{n}=0.
\end{cases}
\end{equation*}
Therefore,
\begin{equation*}\label{const}
\begin{cases}
c_1=\frac{1}{1-\psi_1^{2n}},\\
c_2=\frac{1}{1-\psi_2^{2n}}=\frac{-\psi_1^{2n}}{1-\psi_1^{2n}},
\end{cases}
\end{equation*}
since $\psi_1\psi_2=1$ by \eqref{psi}.

Now we can calculate the effective admittance of $\Gamma_n$:

\begin{equation}\label{PeffF}
\mathcal P_{eff}\left(\Gamma_n\right)={\alpha\left(1-v(x_1)\right)}=\alpha\left(1-c_1 \psi_1-c_2 \psi_2\right)=\frac{\alpha\left(\left(1+\frac{\beta}{2\alpha}+ \xi\right)^{2n-1}+1\right)\left(\frac{\beta}{2\alpha}+\xi\right)}{\left(1+\frac{\beta}{2\alpha}+ \xi\right)^{2n}-1}.
\end{equation}

Since $\mathcal P_{eff}\left(\Gamma_n\right)$ is an element of $K$ as the solution of the Dirichlet problem \eqref{dirpr} over $K$, it can be written without $\xi$:
\begin{align*}
\mathcal P_{eff}\left(\Gamma_n\right)=&\alpha\left(1-c_1 \psi_1-c_2 \psi_2\right)=\alpha\left(1-\frac{\psi_1}{1-\psi_1^{2n}}-\frac{\psi_2}{1-\psi_2^{2n}}\right)\\
=&\alpha\left(1-\frac{\psi_1\left(1-\psi_2^{2n}\right)+\psi_2\left(1-\psi_1^{2n}\right)}{\left(1-\psi_1^{2n}\right)\left(1-\psi_2^{2n}\right)}\right)=\alpha\left(1-\frac{\psi_1+\psi_2-\left(\psi_2^{2n-1}+\psi_1^{2n-1}\right)}{2-\left(\psi_1^{2n}+\psi_2^{2n}\right)}\right)\\
=&\alpha\left(1-\frac{2+\frac{\beta}{\alpha}-2\sum_{k=0}^{n-1}{{2n-1}\choose{2k}}\left(1+\frac{\beta}{2\alpha}\right)^{2n-2k-1}\left(\frac{\beta}{\alpha}+\left(\frac{\beta}{2\alpha}\right)^2\right)^{k}}{2-2\sum_{k=0}^{n}{{2n}\choose{2k}}\left(1+\frac{\beta}{2\alpha}\right)^{2n-2k}\left(\frac{\beta}{\alpha}+\left(\frac{\beta}{2\alpha}\right)^2\right)^{k}}\right),
\end{align*}
where in the last line we have used binomial expansion.

\subsubsection{Infinite ladder networks over Levi-Civita field $\mathcal R$}
We will consider two examples of $\alpha\beta$-network over Levi-Civita field $\mathcal R$. Firstly, let us describe the Levi-Civita field $\mathcal R$. We take the definition of $\mathcal R$ and theorems about its properties from \cite{Berz} and \cite{Shams}.
\begin{definition}\label{LCfield}
A subset $M$ of the rational numbers $\Bbb Q$ is called left-finite if for every $r\in \Bbb Q$ there are only finitely elements of $M$ that are smaller than $r$.

Then the Levi-Civita field is the set of all real valued functions on $\Bbb Q$ with left-finite support with the following operations:
\begin{itemize}
\item
\emph{addition} is defined component-wise
\begin{equation*}
(\alpha+\beta)(q)=\alpha(q)+\beta(q),
\end{equation*}
\item
\emph{multiplication} is defined as follows
\begin{equation*}
(\alpha\cdot \beta)(q)=\sum_{\substack{q_\alpha,q_\beta\in \Bbb Q,\\ q_\alpha+q_\beta=q}} \alpha(q_\alpha)\cdot \beta(q_\beta).
\end{equation*}
\end{itemize}

\end{definition}

It is proved in \cite{Berz}, that $\mathcal R$ is an ordered field with a set of positive elements

\begin{equation}
 \mathcal R^{+}=\{\alpha\in \mathcal R|\alpha(\min \{q\in \Bbb Q\mid \alpha(q)\ne 0\})>0\}.
\end{equation}

We denote by $\tau$ the following element in $\mathcal R$:
\begin{equation}
\tau(q)=
\begin{cases}
1, \mbox{ if } q=1,\\
0, \mbox { otherwise},
\end{cases}
\end{equation}
which plays role of infinitesimal in Levi-Civita field. Therefore, the Levi-Civita field is non-Archimedean.

By \cite{Berz} we can write any $\alpha\in \mathcal R$ as
\begin{equation}\label{LCseries}
\alpha=\sum_{i=1}^\infty \alpha(q_i)\tau^{q_i},
\end{equation}
since $\alpha_n=\sum_{i=1}^n\alpha (q_i)\tau^{q_i}$ converges strongly to the limit $\alpha$ in the order topology.

The set of all polynomials over real numbers $\Bbb R[\tau]=\{a_n \tau^n+\dots a_1 \tau+a_0\mid a_i\in \Bbb R, n\in \Bbb N\}$ is a subring of Levi-Civita field $\mathcal R$ due to \eqref{LCseries}. Therefore, since $\mathcal R$ is a field, the field of rational functions over real numbers
\begin{equation}
\Bbb R(\tau)=\left\{\frac{\sum_{i=k}^n a_i \tau^i}{\sum_{i=l}^m b_i \tau^i}=\frac{a_k\tau^k+a_{k+1}\tau^{k+1}+\cdots+a_n\tau^n}{b_l\tau^l+b_{l+1}\tau^{l+1}+\cdots+b_m\tau^m}\mid a_i,b_i\in\Bbb R, n,m,k,l\in\Bbb N_0\right\}
\end{equation}
is isomorphic to a subfield of $\mathcal R$.




\begin{example}
Let us find the element in $\mathcal R$ which corresponds to the rational function $\frac{1}{\tau^2-4\tau+3}$, i. e. we should find the sequences $\{q_i\}\in \Bbb Q$ and $\{\alpha(q_i)\}\in\Bbb R$ such that

\begin{equation*}
\left(3-4\tau +\tau^2\right)\left(\sum_{q_i}\alpha(q_i)\tau^{q_i} \right)=1.
\end{equation*}
Comparing the coefficients at powers of $\tau$ at right hand side and left hand side, starting from the lowest power, one obtains
\begin{equation*}
q_1=0, \alpha(q_1)=\frac{1}{3}, 
\end{equation*}
\begin{equation*}
q_2=1, \alpha(q_2)=\frac{4}{9}, 
\end{equation*}
and the recurrence relation
\begin{equation*}
q_i=q_{i-1}+1, 3 \alpha(q_i)-4 \alpha(q_{i-1})+\alpha (q_{i-2})=0 \mbox{ for } i>2.
\end{equation*}
Therefore, solving the recurrence relation for $\alpha(q_i)$ we obtain 
\begin{equation*}
\alpha(q_i)=-\frac{1}{2\cdot 3^i}+\frac{1}{2}
\end{equation*}
and
\begin{equation*}
\frac{1}{3-4\tau +\tau^2}=\sum_{i\in \Bbb N}\left(-\frac{1}{2\cdot 3^i}+\frac{1}{2}\right)\tau^{i-1}.
\end{equation*}
\end{example}

Note, that the corresponding order in the field $\Bbb R(\tau)$ is the following:
\begin{equation}\label{OrdR(tau)}
\frac{a_k\tau^k+a_{k+1}\tau^{k+1}+\cdots+a_n\tau^n}{b_l\tau^l+b_{l+1}\tau^{l+1}+\cdots+b_m\tau^m}\succ 0\mbox{ if } \frac{a_k}{b_l}>0.
\end{equation}
Therefore, we can consider Levi-Civita field $\mathcal R$ as an ordered extension of the ordered field $\Bbb R(\tau)$ with the positiveness defined as \eqref{OrdR(tau)}. To consider $\mathcal R$ as an ordered extension of the ordered field $\Bbb R(\tau)$ with the positiveness defined in \cite{Muranova} we make a substitution
\begin{equation}
\tau=\frac{1}{\lambda}.
\end{equation}
 Consequently, we can consider electrical networks over field of rational numbers \cite{Muranova}, as networks over Levi-Civita field and investigate the behavior of the sequence of effective admittances of finite electrical networks. By \cite{Berz} the Levi-Civita field is Cauchy complete in order topology and real-closed.

From the physical point of view we have the following impedances of passive elements 
\begin{itemize}
\item
$(Li\omega)^{-1}=L^{-1}\tau$, $L>0$ for the coil,
\item
$(Ci\omega)=C\tau^{-1}$, $C>0$ for the capacitor,
\item
$R>0$ for the resistor,
\end{itemize}
where $\omega$ is a frequency of the alternating current (see \cite{Muranova}).

Let us consider the Feynman's infinite ladder $LC$-network, assuming that it has zero potential at infinity. It is an $\alpha\beta$-network with $\alpha=L^{-1}\tau$, $\beta={C}\tau^{-1}$, where $L, C>0$, $\alpha,\beta\in \mathcal R$. 

\begin{theorem}\label{ThmLC}
For the Feynman's  ladder $LC$-network ($\alpha=L^{-1}\tau$, $\beta={C}\tau^{-1}$, where $L, C>0$) with zero potential at infinity
\begin{equation}\label{ThmF}
\mathcal P_{eff}\left(\Gamma_n\right)\rightarrow \frac{\beta}{\frac{\beta}{2\alpha}+\xi} \mbox{ as } n\rightarrow\infty
\end{equation}
in the order topology of Levi-Civita field $\mathcal R$, where $\Gamma_n$ is the sequence of the exhausted finite networks
\end{theorem}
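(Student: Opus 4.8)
The plan is to pass to the limit directly in the closed form \eqref{PeffF}. Set $\psi_1=1+\frac{\beta}{2\alpha}+\xi$, the larger root of \eqref{psi}, so that $\psi_1\psi_2=1$ and $\psi_1-1=\frac{\beta}{2\alpha}+\xi$ is independent of $n$. Rewriting \eqref{PeffF} and dividing numerator and denominator of the remaining fraction by $\psi_1^{2n}\neq 0$ gives
\[
\mathcal P_{eff}(\Gamma_n)=\alpha(\psi_1-1)\,\frac{\psi_1^{2n-1}+1}{\psi_1^{2n}-1}=\alpha(\psi_1-1)\,\frac{\psi_1^{-1}+\psi_1^{-2n}}{1-\psi_1^{-2n}}.
\]
So it is enough to show $\psi_1^{-2n}\to 0$ in the order topology of $\mathcal R$, and then to read off and simplify the resulting limit.

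For the convergence $\psi_1^{-2n}\to 0$ I would use the explicit weights. Since $\frac{\beta}{\alpha}=CL\,\tau^{-2}$ and $\frac{\beta}{2\alpha}=\frac{CL}{2}\tau^{-2}$, formula \eqref{xi} gives $\xi=\sqrt{CL\,\tau^{-2}+\frac{C^2L^2}{4}\tau^{-4}}$, which lies in $\mathcal R$ because $\mathcal R$ is real-closed and the radicand is positive, and whose $\tau$-expansion starts with $\frac{CL}{2}\tau^{-2}$. Hence $\psi_1=1+\frac{\beta}{2\alpha}+\xi$ has lowest exponent $-2$ with coefficient $CL$ (the leading coefficients of $\frac{\beta}{2\alpha}$ and of $\xi$ are both positive, so there is no cancellation); in particular $\psi_1$ is infinitely large, so $\psi_2=\psi_1^{-1}$ is a positive infinitesimal whose $\tau$-expansion starts at exponent $2$. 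Therefore $\psi_1^{-2n}=\psi_2^{2n}$ has $\tau$-expansion starting at exponent $4n$, and for any $\varepsilon\in\mathcal R^{+}$ one has $\psi_2^{2n}\prec\varepsilon$ as soon as $4n$ exceeds the least exponent occurring in $\varepsilon$ (immediate from the definition of $\mathcal R^{+}$); this is exactly $\psi_1^{-2n}\to 0$.

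Since $1-\psi_1^{-2n}\to 1\neq 0$ and the arithmetic operations of $\mathcal R$ are continuous for the order topology, letting $n\to\infty$ in the displayed identity yields
\[
\mathcal P_{eff}(\Gamma_n)\longrightarrow \alpha(\psi_1-1)\,\psi_1^{-1}=\alpha(1-\psi_2)=\alpha\Bigl(\xi-\frac{\beta}{2\alpha}\Bigr)=\alpha\xi-\frac{\beta}{2}.
\]
(If one prefers not to invoke continuity of division, a direct computation shows $\mathcal P_{eff}(\Gamma_n)-\bigl(\alpha\xi-\frac{\beta}{2}\bigr)=\alpha(\psi_1-1)\,\psi_1^{-2n}\,\frac{1+\psi_1^{-1}}{1-\psi_1^{-2n}}$, whose $\tau$-expansion starts at an exponent tending to $+\infty$.) It then remains to identify $\alpha\xi-\frac{\beta}{2}$ with the claimed value $\frac{\beta}{\frac{\beta}{2\alpha}+\xi}$: multiplying numerator and denominator of the latter by $2\alpha\xi-\beta$ and using $4\alpha^2\xi^2-\beta^2=4\alpha\beta$, which is immediate from \eqref{xi}, gives $\frac{2\alpha\beta(2\alpha\xi-\beta)}{4\alpha\beta}=\alpha\xi-\frac{\beta}{2}$, as required.

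I expect the middle step to be the real obstacle: one must place $\psi_1$ precisely on the valuation scale of $\mathcal R$ — in particular check that the sum $1+\frac{\beta}{2\alpha}+\xi$ does not cancel its leading terms — and then connect ``least $\tau$-exponent $\to+\infty$'' with convergence to $0$ in the order topology. Everything else is algebraic rearrangement of \eqref{PeffF} together with the identity $4\alpha^2\xi^2-\beta^2=4\alpha\beta$.
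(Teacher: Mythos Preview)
Your argument is correct and rests on the same key point as the paper's proof: the leading $\tau$-exponent of $\psi_1=1+\frac{\beta}{2\alpha}+\xi$ is $-2$ (with no cancellation), so $\psi_1^{-2n}$ has leading exponent $4n\to+\infty$ and hence tends to $0$ in the order topology. The only difference is packaging: the paper subtracts the target first and simplifies the difference to $\dfrac{2\alpha\xi}{\psi_1^{2n}-1}$ before estimating, whereas you divide through by $\psi_1^{2n}$ and pass to the limit via continuity (and your alternative direct-difference computation in fact reproduces the paper's formula, since $(\psi_1-1)(1+\psi_1^{-1})=\psi_1-\psi_2=2\xi$).
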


\begin{remark}
For the Feynman's ladder $LC$-network
\begin{equation*}
\frac{\beta}{\frac{\beta}{2\alpha}+\xi}=\left(\frac{C}{2\tau}-\frac{\tau}{L}\sqrt{\frac{CL}{\tau^{2}}+\left(\frac{CL}{2 \tau^{2}}\right)^2}\right)
\end{equation*}
and the motivation for this quantity was Feynman's impedance for infinite ladder $LC$-network (see \cite[22-13]{Feynman}).
\end{remark}
\begin{proof}
Firstly, we should write $\xi$ as an element of Levi-Civita field, i. e. as power series \eqref{LCseries}.

\begin{align*}
\xi=&\sqrt{\frac{CL}{\tau^{2}}+\left(\frac{CL}{2 \tau^{2}}\right)^2}=\frac{CL}{2 \tau^{2}}\sqrt{\frac{4 \tau^2}{CL}+1}=\frac{CL}{2 \tau^{2}}\sum_{k=0}^{\infty}\binom{\frac{1}{2}}{k}\left(\frac{4 \tau^{2}}{CL}\right)^k\\
=&\frac{CL}{2\tau^2}\cdot 1+\frac{CL}{2\tau^2}\cdot \frac{1}{2}\cdot\frac{4 \tau^2}{CL}- \frac{1}{8}\cdot\frac{CL}{2\tau^2}\cdot \left(\frac{4 \tau^2}{CL}\right)^2+o\left(\tau^2\right)\\
=&\frac{CL}{2}\tau^{-2}+1-\tau^2+o\left(\tau^2\right).
\end{align*}
Note that here and further $o\left(\tau^m\right)$, where $m\in \Bbb Z$ means $\sum_{k=m+1}^{\infty}a_k \tau^k, a_k\in \Bbb R$. 

Let us calculate the difference $\mathcal P_{eff}\left(\Gamma_n\right)-\frac{\beta}{\frac{\beta}{2\alpha}+ \xi}$.
\begin{align*}
\begin{split}
&\mathcal P_{eff}\left(\Gamma_n\right)-\frac{\beta}{\frac{\beta}{2\alpha}+ \xi}=\frac{\alpha\left(\frac{\beta}{2\alpha}+ \xi\right)\left(\left(1+\frac{\beta}{2\alpha}+\xi\right)^{2n-1}+1\right)}{\left(1+\frac{\beta}{2\alpha}+\xi\right)^{2n}-1}-\frac{\beta}{\frac{\beta}{2\alpha}+ \xi}\\
=&\frac{\alpha\left(\frac{\beta}{2\alpha}+ \xi\right)^2\left(\left(1+\frac{\beta}{2\alpha}+\xi\right)^{2n-1}+1\right)-\beta\left(\left(1+\frac{\beta}{2\alpha}+\xi\right)^{2n}-1\right)}{\left(\left(1+\frac{\beta}{2\alpha}+\xi\right)^{2n}-1\right)\left(\frac{\beta}{2\alpha}+ \xi\right)}
 \end{split}
\end{align*}
The nominator $A$ of the last expression is
\begin{align*}
\begin{split}
A=&\alpha\left(\left(1+\frac{\beta}{2\alpha}+ \xi\right)^{2n-1}+1\right)\left(\frac{\beta}{2\alpha}+\xi\right)^2-\beta\left(\left(1+\frac{\beta}{2\alpha}+ \xi\right)^{2n}-1\right)   \\
=&\alpha\left(D+1\right)\left(\frac{\beta}{2\alpha}+\xi\right)^2-\beta\left(\left(1+\frac{\beta}{2\alpha}+ \xi\right)D-1\right),
 \end{split}
\end{align*}
where $D=\left(1+\frac{\beta}{2\alpha}+ \xi\right)^{2n-1}$.

Since 
\begin{equation*}
\left(\frac{\beta}{2\alpha}+\xi\right)^2=\frac{\beta^2}{4\alpha^2}+\frac{\beta}{\alpha}\xi+\xi^2=\frac{\beta^2}{2\alpha^2}+\frac{\beta}{\alpha}+\frac{\beta}{\alpha}\xi=\frac{\beta}{\alpha}\left(\frac{\beta}{2\alpha}+1+\xi\right)
\end{equation*}
and 
\begin{equation*}
\xi^2=\frac{\beta}{\alpha}+\frac{\beta^2}{4\alpha^2},
\end{equation*}
we have
\begin{align*}
\begin{split}
A=&\alpha\left(D+1\right)\frac{\beta}{\alpha}\left(\frac{\beta}{2\alpha}+1+\xi\right)-\beta\left(\left(1+\frac{\beta}{2\alpha}+ \xi\right)D-1\right)\\
=&D\cdot 0+2\beta+\frac{\beta^2}{2\alpha}+ \beta\xi=2\alpha\left( \frac{\beta}{\alpha}+\frac{\beta^2}{4\alpha^2}+ \frac{\beta}{2\alpha}\xi\right)\\
=&2\alpha\left( \xi^2+ \frac{\beta}{2\alpha}\xi\right)=2\alpha\xi\left( \xi+ \frac{\beta}{2\alpha}\right).
\end{split}
\end{align*}
Therefore, 
\begin{equation}\label{diff}
\mathcal P_{eff}\left(\Gamma_n\right)-\frac{\beta}{\frac{\beta}{2\alpha}+ \xi}=\frac{2\alpha\xi}{\left(1+\frac{\beta}{2\alpha}+ \xi\right)^{2n}-1}.
\end{equation}
The right hand side of the last expression is, obviously, positive in $(\mathcal R, \succ)$, therefore
\begin{align*}
\left|\mathcal P_{eff}\left(\Gamma_n\right)-\frac{\beta}{\frac{\beta}{2\alpha}+ \xi}\right|=&\frac{2\alpha\xi}{\left(\left(1+\frac{\beta}{2\alpha}+ \xi\right)^{2n}-1\right)}\\
=&\frac{2\tau\xi}{L\left(\left(1+\frac{C L}{2\tau^2}+ \xi\right)^{2n}-1\right)}\\
=&\frac{CL\tau^{-1}+2 \tau -2\tau^3+o\left(\tau^3\right)}{L\left(\left(CL\tau^{-2}+2-\frac{1}{CL}\tau^2+o\left(\tau^2\right)\right)^{2n}-1\right)}\\
=&\frac{CL\tau^{-1}+2 \tau-\frac{2}{CL}\tau^3+o\left(\tau^3\right)}{L\left(CL\tau^{-2}\right)^{2n}+o\left(\tau^{-4n-2}\right)}\\
=&\left(C\tau^{-1}+o\left(\tau^{-1}\right)\right)\left(\frac{1}{\left(CL\right)^{2n}}\tau^{4n}+o\left(\tau^{4n+2}\right)\right)\\
=&\frac{C}{\left(CL\right)^{2n}}\tau^{4n-1}+o\left(\tau^{4n-1}\right)\rightarrow 0,
\end{align*}
when $n\rightarrow \infty$.
\end{proof}

\begin{remark}\label{absuf}
From the proof one can see that  \eqref{ThmF} is true for $\alpha\beta$-network whenever for any $\gamma\in \mathcal R$ exists $N_0\in \Bbb N$ such that $n>N_0$ implies $\left(\frac{\beta}{\alpha}\right)^n\succ \gamma$.
\end{remark}

\begin{example}\label{ExCL}
For the $CL$-network ($\alpha={C}\tau^{-1}$, $\beta=L^{-1}\tau$, $L,C>0$) effective admittances of the exhausted finite networks do not converge in the Levi-Civita field $\mathcal R$.

\begin{proof}
In this case $\xi=\frac{\tau}{\sqrt{CL}}+\left(\frac{\tau}{2\sqrt{CL}}\right)^3+o(\tau^3)$.
Let us prove, that $\{\mathcal P (\Gamma_n)\}_{n=1}^\infty$ is not a Cauchy sequence in $\mathcal R$. Indeed
\begin{align*}
\mathcal P &(\Gamma_{n+1})-\mathcal P (\Gamma_n)\\
=&\frac{\alpha\left(\frac{\beta}{2\alpha}+ \xi\right)\left(\left(1+\frac{\beta}{2\alpha}+\xi\right)^{2n+1}+1\right)}{\left(1+\frac{\beta}{2\alpha}+\xi\right)^{2n+2}-1}
-\frac{\alpha\left(\frac{\beta}{2\alpha}+ \xi\right)\left(\left(1+\frac{\beta}{2\alpha}+\xi\right)^{2n-1}+1\right)}{\left(1+\frac{\beta}{2\alpha}+\xi\right)^{2n}-1}\\
=&\alpha\left(\frac{\beta}{2\alpha}+ \xi\right)\left(\frac{\left(1+\frac{\beta}{2\alpha}+\xi\right)^{2n+1}+1}{\left(1+\frac{\beta}{2\alpha}+\xi\right)^{2n+2}-1}
-\frac{\left(1+\frac{\beta}{2\alpha}+\xi\right)^{2n-1}+1}{\left(1+\frac{\beta}{2\alpha}+\xi\right)^{2n}-1}\right)
\end{align*}
Since
\begin{equation*}
\psi_1=1+\frac{\beta}{2\alpha}+\xi=1+\frac{\tau}{\sqrt{CL}}+o\left(\tau^{1}\right),
\end{equation*}
we can rewrite
\begin{align*}
\mathcal P &(\Gamma_{n+1})-\mathcal P (\Gamma_n)=\alpha\left(\frac{\beta}{2\alpha}+ \xi\right)\left(\frac{\psi_1^{2n+1}+1}{\psi_1^{2n+2}-1}-\frac{\psi_1^{2n-1}+1}{\psi_1^{2n}-1}\right)\\
=&\alpha\left(\frac{\beta}{2\alpha}+ \xi\right)\frac{\left(\psi_1^{2n+1}+1\right)\left(\psi_1^{2n}-1\right)-\left(\psi_1^{2n-1}+1\right)\left(\psi_1^{2n+2}-1\right)}{\left(\psi_1^{2n+2}-1\right)\left(\psi_1^{2n}-1\right)}\\
=&C\tau^{-1}\left(\frac{\tau}{\sqrt{CL}}+o\left(\tau^1\right)\right)\frac{\left(\psi_1^{2n+1}+1\right)\left(\psi_1^{2n}-1\right)-\left(\psi_1^{2n-1}+1\right)\left(\psi_1^{2n+2}-1\right)}{\left(\psi_1^{2n+2}-1\right)\left(\psi_1^{2n}-1\right)}
\end{align*}
Substituting
\begin{align*}
&\left(\psi_1^{2n+1}+1\right)\left(\psi_1^{2n}-1\right)-\left(\psi_1^{2n-1}+1\right)\left(\psi_1^{2n+2}-1\right)\\
=&\left(2+\frac{\tau}{\sqrt{CL}}(2n+1)+o\left(\tau^1\right)\right)\left(\frac{\tau}{\sqrt{CL}}(2n)+o\left(\tau^1\right)\right)\\
-&\left(2+\frac{\tau}{\sqrt{CL}}(2n-1)+o\left(\tau^1\right)\right)\left(\frac{\tau}{\sqrt{CL}}(2n+2)+o\left(\tau^1\right)\right)\\
=&-4\frac{\tau}{\sqrt{CL}}+o\left(\tau^1\right)
\end{align*}
and
\begin{align*}
&\left(\psi_1^{2n+2}-1\right)\left(\psi_1^{2n}-1\right)=\left(\frac{\tau}{\sqrt{CL}}(2n+2)+o\left(\tau^1\right)\right)\left(\frac{\tau}{\sqrt{CL}}(2n)+o\left(\tau^1\right)\right)\\
=&\frac{\tau^2}{CL}(4n^2+4n)+o\left(\tau^2\right)
\end{align*}
we obtain
\begin{align*}
\mathcal P &(\Gamma_{n+1})-\mathcal P (\Gamma_n)\\
&=\left(\frac{C}{\sqrt{CL}}+0(\tau^0)\right)\left(-4\frac{\tau}{\sqrt{CL}}+o\left(\tau^1\right)\right)\left(\frac{CL}{4n^2+4n}\tau^{-2}+o\left(\tau^{-2}\right)\right)\\
&=\frac{-4C}{4n^2+n}\tau^{-1}+o\left(\tau^{-1}\right)\succ \tau \mbox{ for any } n\in \Bbb N \mbox { and for any } L,C>0.
\end{align*}
Therefore, $\{\mathcal P (\Gamma_n)\}_{n=1}^\infty$ is not a Cauchy sequence in $\mathcal R$.
\end{proof}

\end{example}

Therefore, the following question: 
\begin{quote}
Under what conditions the effective admittance of infinite network over non-Archimedean field could be defined?
\end{quote}
remains open. Note, that Remark \ref{absuf} gives some sufficient condition for $\alpha\beta$-network.

\section*{Acknowledgement}
The author thanks her scientific advisor, Professor Alexander Grigor'yan, for fruitful discussions on the topic.

\newpage

\end{document}